\numberwithin{equation}{section}
\newcommand{\R}{{\mathbb R}}
\newtheorem{theorem}{Theorem}[section]
\newtheorem{lemma}[theorem]{Lemma}
\newtheorem{proposition}[theorem]{Proposition}
\newtheorem{remark}[theorem]{Remark}
\begin{document}

\title{\vskip-0.3in Quasilinear elliptic inequalities with Hardy potential and nonlocal terms}

\author{Marius Ghergu\footnote{School of Mathematics and Statistics,
    University College Dublin, Belfield, Dublin 4, Ireland; {\tt
      marius.ghergu@ucd.ie}}\;\,\footnote{Institute of Mathematics Simion Stoilow of the Romanian Academy, 21 Calea Grivitei St., 010702 Bucharest, Romania}
      $\;\;$        $\;$
{Paschalis Karageorgis\footnote{School of Mathematics,
    Trinity College Dublin; {\tt
        pete@maths.tcd.ie}}}
 $\;\;$    and    $\;$
{Gurpreet Singh\footnote{School of Mathematics,
    Trinity College Dublin; {\tt
        gurpreet.bajwa2506@gmail.com}}}
}

%\date{ 6 February, 2019 }
\maketitle

\begin{abstract}
We study the quasilinear elliptic inequality 
$$
-\Delta_m u - \frac{\mu}{|x|^m}u^{m-1} \geq (I_\alpha*u^p)u^q \quad\mbox{ in }\mathbb{R}^N\setminus \overline B_1, N\geq 1,
$$
where $p>0$, $q, \mu \in \mathbb{R}$, $m>1$ and $I_\alpha$ is the Riesz potential of order $\alpha\in (0,N)$. We obtain necessary and sufficient conditions for the existence of positive solutions. 
\end{abstract}

\noindent{\bf Keywords:} Quasilinear elliptic inequalities;  $m$-Laplace operator; Hardy term.

\medskip

\noindent{\bf 2010 AMS MSC:} 35J62, 35A23, 35B09
\section{Introduction and the main results}\label{sec1}
In this paper we are concerned with the following quasilinear elliptic inequality
\begin{equation}\label{go}
-\Delta_m u - \frac{\mu}{|x|^m}u^{m-1} \geq (I_\alpha*u^p)u^q \quad\mbox{ in }\R^N\setminus \overline B_1, N\geq 1,
\end{equation}
where $B_1\subset \R^N$ denotes the open unit ball, $\Delta_m u= {\rm div}(|\nabla u|^{m-2}\nabla u)$, $m>1$ is the $m-$Laplace operator  of $u$ and $p>0$, $q,\mu \in \R$, $\alpha\in (0,N)$.

Throughout this paper, $I_\alpha:\R^N\to \R$ denotes the {\it Riesz potential} of order $\alpha\in (0,N)$, $N\geq 1$,  given by (see, e.g., \cite{LL2001})
\begin{equation}\label{rieszdef}
I_\alpha(x)=\frac{A_\alpha}{|x|^{N-\alpha}}\,,\quad \mbox{ with }\; A_\alpha=\frac{\Gamma\big(\frac{N-\alpha}{2}\big)}{\Gamma(\frac{\alpha}{2}\big) \pi^{N/2}2^\alpha}= C(N, \alpha)> 0,
\end{equation}
and $I_{\alpha}*u^{p}$ is the convolution operation which is given by
$$
(I_{\alpha}*u^{p})(x)= \int_{\R^N\setminus \overline{B}_1}I_{\alpha}(x-y)u^{p}(y) dy.
$$

We say that $u\in W_{loc}^{1, m}(\R^N\setminus \overline B_1) \cap C(\R^N\setminus \overline B_1) $ is a positive  solution of \eqref{go} if
\begin{enumerate}
\item[(i)]  $(I_\alpha*u^p)u^q\in L^{1}_{loc}(\R^N\setminus \overline B_1)$ and $u>0$;
\item[(ii)] $u$ satisfies
\begin{equation}\label{c1}
\int_{\R^N\setminus \overline B_1}\frac{u^{p}(y)}{1+|y|^{N-\alpha}}dy< \infty;
\end{equation}

\item[(iii)] for any $\phi \in C_{c}^{\infty}(\R^N\setminus \overline B_1)$, $\phi\geq 0$ we have
\begin{equation*}
\int_{\R^N\setminus \overline B_1} \Big( |\nabla u|^{m-2}\nabla u\nabla \phi- \frac{\mu}{|x|^{m}}u^{m-1}\phi\Big)
 \geq \int_{\R^N\setminus \overline B_1}(I_\alpha*u^p)u^q \phi.
\end{equation*}
\end{enumerate}
Note that condition \eqref{c1} is needed since we require $I_\alpha*u^p< \infty$ a.e. in $\R^N\setminus \overline B_1$.

An important role in our analysis will be played by the quantity
\begin{equation}\label{chh}
C_{H}=\Big|\frac{N-m}{m}\Big|^m,
\end{equation}
which is the optimal constant in the Hardy inequality 
\begin{equation}\label{hardy1}
\int_{\R^N} |\nabla \phi|^m \geq C_{H}\int_{\R^N} \frac{|\phi|^{m}}{|x|^{m}}\quad\mbox{ for all } \phi\in C^1_c(\R^N).
\end{equation}

We consider the inequality \eqref{go} in $\R^N\setminus \overline B_1$, but our approach can be carried over to the case where \eqref{go} is posed in any  open set $\Omega\subset \R^N$ with the properties: $\R^N\setminus \Omega$ is bounded and $0\not\in \Omega$. In \cite{GKS2020} we studied nonlocal inequalities for general quasilinear operators of type
\begin{equation}\label{paperjde}
-{\rm div}[\mathcal{A}(x, u, \nabla u)] \geq (I_\alpha \ast u^p)u^q
\quad\mbox{ in }\Omega \subset \R^N,
\end{equation}
for three types of open sets $\Omega$ as follows: (i) $\Omega$ is bounded; (ii) $\Omega$ is the exterior of a closed ball; (iii) $\Omega=\R^N$. Also,  the differential operator $\mathcal{A}$ in \eqref{paperjde} is required to satisfy minimal structural assumptions, namely, to be  a {\it weakly-m-coercive} operator, that is,
\begin{equation}\label{wmcc}
\mathcal{A}(x, u, \eta)\cdot\eta \geq c |\mathcal{A}(x, u, \eta)|^{m'}
\quad\mbox{ for all } (x, u, \eta)\in \Omega \times[0, \infty)\times \R^N,
\end{equation} 
where $m'=m/(m-1)>1$ and $c>0$ is a constant. 

The present work, already announced in \cite{GKS2020},  investigates the influence of the Hardy term $\mu u^{m-1}/|x|^m$ in \eqref{go} as well as the more general inequality
\begin{equation}\label{g1}
-{\rm div}[\mathcal{A}(x, u, \nabla u)] - \frac{\mu}{ |x|^{\theta}} u^{m-1} \geq (I_\alpha \ast u^p)u^q
\quad\mbox{ in } \R^N\setminus \overline{B}_1,
\end{equation}
where $\mu,\theta\in\R$ and $\mathcal{A}$ is {\it weakly-m-coercive} operator as given in \eqref{wmcc}.

Our study is motivated by the semilinear elliptic inequality
\begin{equation}\label{ch}
-\Delta u + \frac{\lambda}{|x|^{\gamma}}u \geq (I_\alpha*u^p)u^q \quad\mbox{ in } \R^{N}\setminus B_1,
\end{equation}
which was considered in \cite{MV2013}. The equation
$$
-\Delta u + \lambda u =(I_\alpha*u^p)u^q
$$
is known in the literature as the Choquard or Choquard-Pekar equation and arises in many areas of mathematical modelling of real life phenomena with direct links to quantum physics.
We only mention here the recent works \cite{CDM2008, CZ2016, CZ2018, FG2020, GS2019, GT2016, MV2013, MV2017,S2019, WH2019} to illustrate this aspect.

Another motivation for our study comes from \cite{LLM2007} where the authors consider the inequality
\begin{equation*}
-\Delta_p u - \frac{\mu}{|x|^p}u^{p-1} \geq \frac{C}{|x|^{\sigma}}u^q,
\end{equation*}
in exterior domains of $\R^N$, $N\geq 2$ where $p>1$,  $q, \sigma, \mu\in \R$. We also refer the reader to \cite{DM2010, MP2001} for a wide range of quasilinear elliptic inequalities.

Let us point out that our approach is distinct to that in \cite{MV2013}. One key argument in the approach of \eqref{ch}  is the nonlocal version of the Agmon-Allegretto-Piepenbrink positivity principle (see \cite[Proposition 3.2]{MV2013}). As we have already emphasized in \cite{GKS2020}, this approach relies essentially on the linear character of the differential operator in \eqref{ch} and does not carry over to a quasilinear setting such as \eqref{go}. In turn, we provide useful a priori estimates (see Lemma \ref{l101} in Section 3) which  allow us to obtain optimal conditions for the existence of a solution to \eqref{go}. 

The existence of a solution to \eqref{go} is related to the power-type equation
\begin{equation}\label{bet}
- \beta |\beta|^{m-2} (\beta(m-1) + N-m)=\mu,
\end{equation}
which has real solutions if and only if $\mu\leq C_H$. In such a case, we denote
by $\beta^-\leq \beta^+$ the two solutions of equation \eqref{bet}, which are distinct if $\mu<C_H$.

If $N>m$ we have the following result concerning \eqref{go}.

\begin{theorem}\label{thmain1}
Assume $N>m>1$, $p>0$, $q,\mu\in \R$ and $\alpha\in (0, N)$. Then \eqref{go} has a positive solution if and only if the following conditions hold:
\begin{enumerate}
\item[{\rm(i)}] $\mu\leq C_H$;
\item[\rm (ii)] $p> \frac{\alpha}{|\beta^{-}|} $;
\item[\rm (iii)] $p+q>m-1+\frac{m+\alpha}{|\beta^-|}$;
\item[{\rm (iv)}] $
\left\{
\begin{aligned}
& q> m-1-\frac{N-m-\alpha}{|\beta^{-}|} &&\quad \mbox{ if } \alpha>N-m;\\
&q\geq m-1 &&\quad \mbox{ if } \alpha=N-m;\\
& q> m-1 - \frac{N-m-\alpha}{N}p &&\quad \mbox{ if } \alpha<N-m;\\
&q> m-1-\frac{N-m-\alpha}{|\beta^+|} &&\quad \mbox{ if } \alpha<N-m \mbox{ and } \mu>0.
\end{aligned}
\right.
$
\end{enumerate}
\end{theorem}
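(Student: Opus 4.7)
\textbf{Necessity.} Suppose $u > 0$ solves \eqref{go}. Condition (i), $\mu \leq C_H$, comes from coupling the weak inequality with a Picone-type identity for $\Delta_m$: testing \eqref{go} against $\phi^m u^{1-m}$ with $\phi \in C_c^\infty(\R^N \setminus \overline B_1)$ and discarding the nonnegative nonlocal term yields
\[
\int |\nabla \phi|^m \;\geq\; \mu \int \frac{\phi^m}{|x|^m}
\]
for every admissible $\phi$; the sharpness of $C_H$ in \eqref{hardy1} (still sharp on $C_c^\infty(\R^N\setminus\overline B_1)$ by a scaling argument) then forces $\mu \leq C_H$. With (i) in hand, the exponents $\beta^\pm$ of \eqref{bet} are real and Lemma \ref{l101} will provide the key pointwise a priori estimate
\[
u(x) \;\geq\; c\,|x|^{\beta^-}, \qquad |x| \geq R_0,
\]
obtained by comparing $u$ with the explicit radial solutions $|x|^{\beta^\pm}$ of the homogeneous equation $-\Delta_m w = \mu |x|^{-m} w^{m-1}$. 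Inserting this bound into \eqref{c1} immediately forces $p|\beta^-| > \alpha$, which is (ii).

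For (iii) and (iv) I would estimate the nonlocal term from below by decomposing the convolution integral at $x$ into the three regions $|y| \leq |x|/2$, $|x|/2 \leq |y| \leq 2|x|$ and $|y| \geq 2|x|$, inserting $u^p(y) \geq c|y|^{p\beta^-}$, and computing: the leading-order behaviour of $I_\alpha\ast u^p$ at infinity is of order $|x|^{p\beta^- + \alpha}$ when $p\beta^- + N < 0$ and of order $|x|^{\alpha - N}$ when $p\beta^- + N > 0$, with a logarithmic factor at the threshold. This trichotomy is exactly the origin of the split $\alpha \lessgtr N - m$ appearing in (iv). Testing \eqref{go} against a radial cutoff $\phi_R$ supported in $\{R \leq |x| \leq 2R\}$, raised to a sufficiently high power to absorb $|\nabla \phi_R|^m$, and letting $R \to \infty$ produces an exponent balance that, after case analysis, reduces to (iii) and to the first three subcases of (iv). The fourth subcase, active when $\alpha < N - m$ and $\mu > 0$, is obtained by using instead the sharper lower bound $u(x) \geq c\,|x|^{\beta^+}$ (also supplied by Lemma \ref{l101}; recall $\beta^- \leq \beta^+$) in the outer region $|y| \geq 2|x|$ of the convolution.

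\textbf{Sufficiency.} Conversely, assuming (i)--(iv), I would construct an explicit positive supersolution of the form $u(x) = \varepsilon |x|^{\gamma}$ on $\{|x| \geq 2\}$, extended smoothly and positively to all of $\R^N \setminus \overline B_1$. Under (i) the set of exponents $\gamma$ satisfying
\[
-\gamma|\gamma|^{m-2}\bigl(\gamma(m-1) + N - m\bigr) - \mu \;>\; 0
\]
is a nonempty open interval, on which the left-hand side of \eqref{go} is positive and comparable to $|x|^{\gamma(m-1)-m}$. Evaluating $I_\alpha\ast|y|^{p\gamma}$ in each of the three asymptotic regimes above and requiring the resulting right-hand side to be dominated reduces to one exponent inequality per subcase of (iv); the joint compatibility of these inequalities under (ii)--(iv) is exactly what leaves room for a valid $\gamma$, after which taking $\varepsilon > 0$ small completes the construction.

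\textbf{Main obstacle.} The technically hardest step is the sharp convolution estimate for $I_\alpha \ast u^p$ and its tracking through the three regimes $\alpha > N-m$, $\alpha = N-m$, $\alpha < N-m$, together with the $\beta^+$-improvement producing the fourth subcase of (iv) when $\mu > 0$. This is precisely where the nonlocal and quasilinear features of \eqref{go} couple in a nontrivial way and where the proof must depart from the linear-operator, Agmon--Allegretto--Piepenbrink-based approach of \cite{MV2013}.
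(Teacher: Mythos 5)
Your overall skeleton (pointwise lower bound $u\geq c|x|^{\beta^-}$, three-zone Riesz estimates, nonlinear-capacity test functions for necessity, explicit power-type solutions for sufficiency) matches the paper's strategy, but there are genuine gaps. The most serious one is your route to (iv)$_4$: there is no ``sharper lower bound'' $u(x)\geq c|x|^{\beta^+}$. Proposition \ref{p1} (not Lemma \ref{l101}, which is an integral estimate, not a pointwise one) only yields $u\geq c|x|^{\beta^-}$, and nothing better can hold: the solutions constructed in the sufficiency part are exactly $\kappa|x|^{\gamma}$ with $\beta^-<\gamma<\min\{\beta^+,0\}$, and these violate any bound of the form $u\geq c|x|^{\beta^+}$. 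So your mechanism would ``prove'' nonexistence inside the existence region. In the paper, (iv)$_4$ is obtained instead from the sharp Liouville results for the local inequality $-\Delta_m u-\mu|x|^{-m}u^{m-1}\geq C|x|^{\alpha-N}u^{q}$ in the sublinear regime $q<m-1$ (Proposition \ref{p2}(ii)--(iv), after using Lemma \ref{lbas}(i)), where $\beta^+$ enters for structural reasons, not through a lower bound on $u$. A second gap: the ``exponent balance'' you describe only produces the closed (non-strict) inequalities, whereas (ii) aside, the borderline cases must be excluded by separate arguments: equality in (iii) with $q<m-1$ is ruled out in Proposition \ref{mL1}(ii), Case 2, by comparing $u$ from below with $\kappa|x|^{\beta^-}\log^{\tau}(s|x|)$ and contradicting the capacity bound $\int_{B_{2R}\setminus B_R}u^{p}\leq CR^{N+p\beta^-}$; equality in (iv)$_1$ relies on the equality case of Proposition \ref{p2}(i); and the critical case of (iv)$_3$ needs the extra step in Theorem \ref{thm2}(ii) showing $\int_{\R^N\setminus B_1}u^{p}=\infty$. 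None of this is visible in your sketch, and when $q<m-1$ the naive insertion of $u\geq c|x|^{\beta^-}$ into $u^{q-m+1}$ goes the wrong way, so the H\"older manipulations of Lemma \ref{l1} and Theorem \ref{thm2} are genuinely needed. (Also, testing against $\phi_R^{\lambda}$ alone cannot absorb the gradient term for a quasilinear operator; the factor $u^{1-m}$ as in Lemma \ref{l101} is essential, and the Hardy term contributes $CR^{N-m}$ which has to be tracked.)

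On the sufficiency side there is a second genuine gap: your construction breaks down at $\mu=C_H$, which is allowed by (i). In that case $\beta^-=\beta^+=\beta_*$ and the set of exponents $\gamma$ with $G(\gamma)-\mu>0$ is empty, so no pure power $\varepsilon|x|^{\gamma}$ can serve as a supersolution (at $\gamma=\beta_*$ the left-hand side of \eqref{go} vanishes identically while the right-hand side is positive). The paper handles this with the logarithmically corrected ansatz $u=\kappa|x|^{\beta_*}\log^{\tau}(s|x|)$, $\tau\in(0,2/m)$, using the expansion \eqref{radial2} in which $A=B=0$ and $C>0$, so the leading term on the left is of order $|x|^{(m-1)\beta_*-m}\log^{\tau(m-1)-2}(s|x|)$; you would need this (or some substitute) to cover $\mu=C_H$. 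For $\mu<C_H$ your plan is essentially the paper's Case 1, but note that the admissible $\gamma$ must be chosen differently in four subcases (according to $-N/p\lessgtr\beta^-$, $\alpha\lessgtr N-m$, $q\lessgtr m-1$), which is precisely where conditions (ii)--(iv) are consumed.
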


\begin{figure}[h!]
\begin{center}
  \includegraphics[width=6.0in]{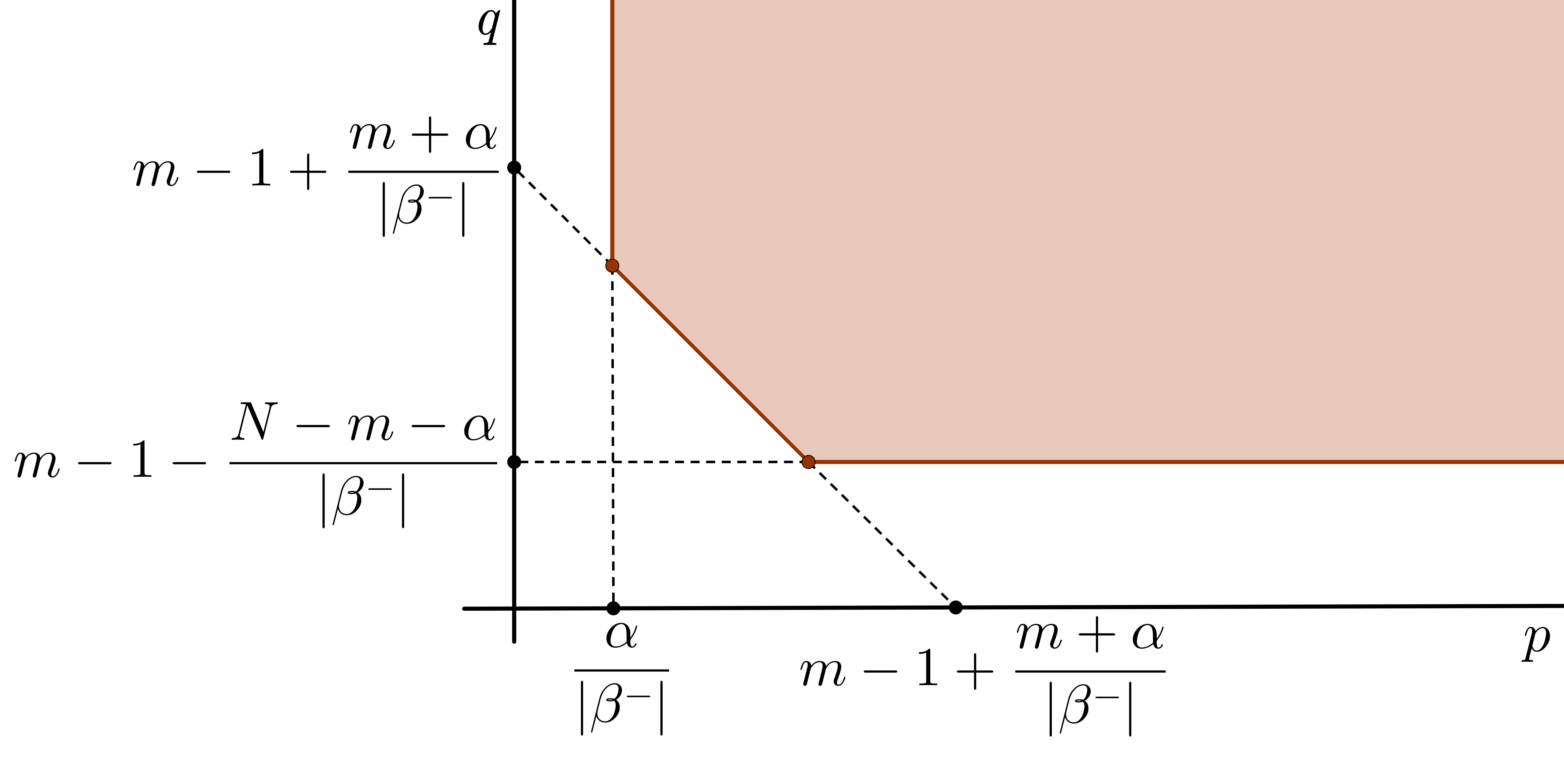}
  \caption{The existence region (shaded) for positive solutions to \eqref{go} in the case $N>m>1$,  $\alpha\geq N-m$} and $\mu\leq C_H$.
  \label{fig:case1}
  \end{center}
\end{figure}

\vspace{-0.3in}

Figure 1 above concerns the case  $\alpha\geq N-m>0$ and $\mu\leq C_H$ in Theorem \ref{thmain1}. The shaded region depicts the existence set in the $pq$ plane. Figures 2--3 below illustrate the existence region in Theorem \ref{thmain1} for $0<\alpha<N-m$.
These are in line with \cite[Theorem 9]{MV2013} related to the semilinear inequality \eqref{ch}.

\begin{figure}[h!]
\begin{center}
  \includegraphics[width=5.5in]{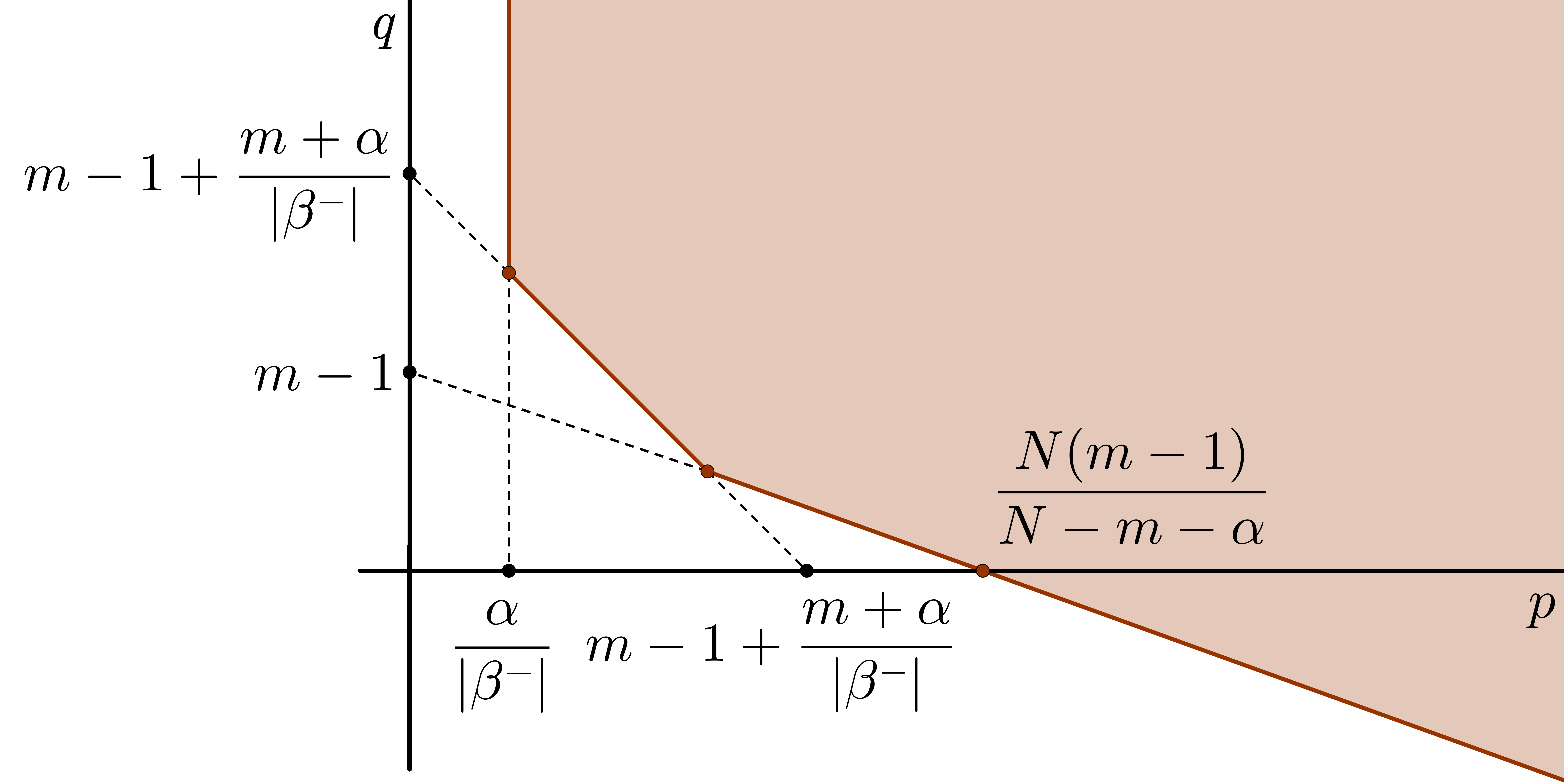}
  \caption{The existence region (shaded) for positive solutions to \eqref{go} in the case $N>m>1$,  $\alpha< N-m$} and $\mu\leq 0$.
  \label{fig:case2_1}
  \end{center}
\end{figure}

\begin{figure}[h!]
\begin{center}
 \includegraphics[width=5.5in ]{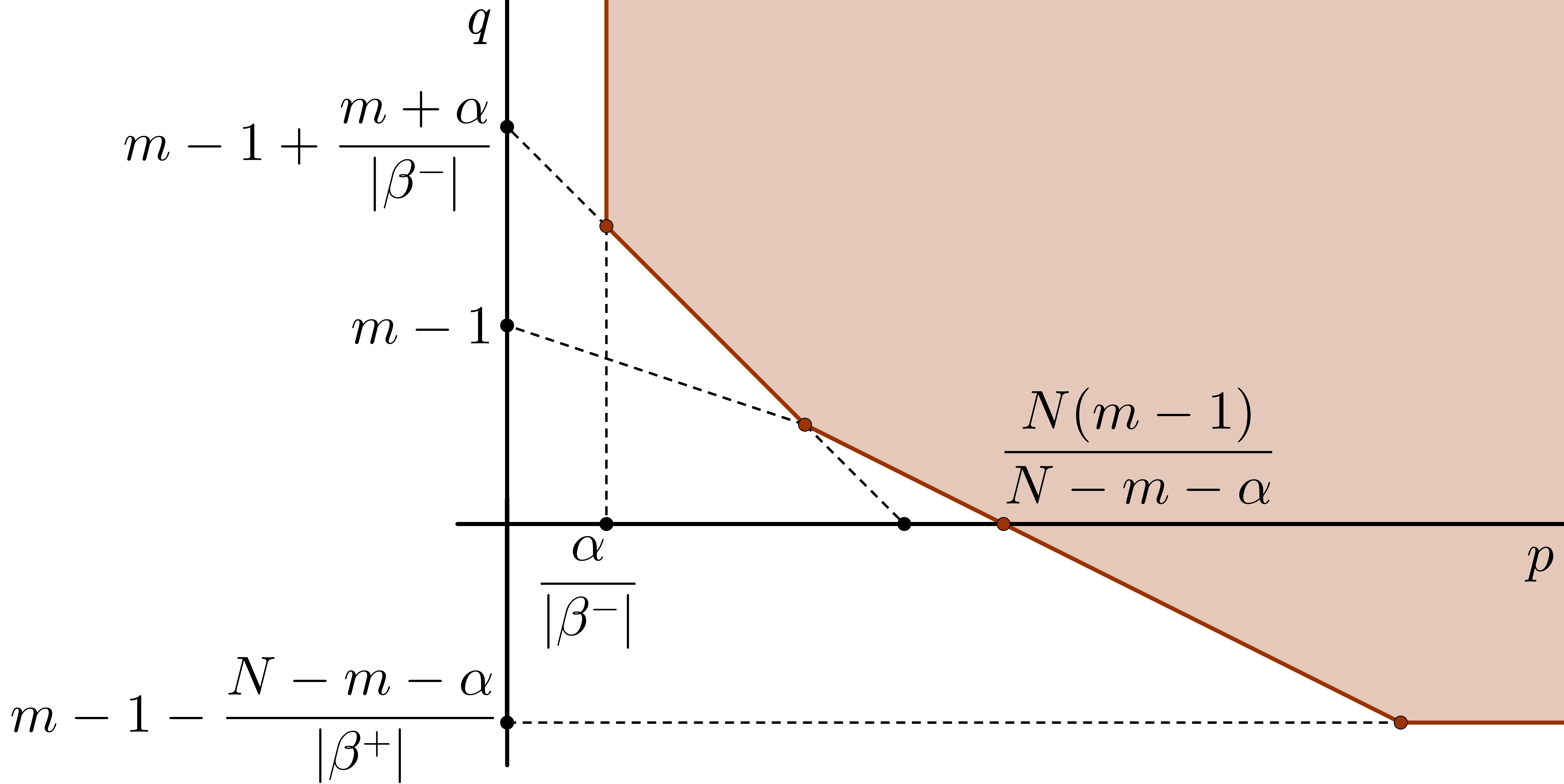}
  \caption{The existence region (shaded) for positive solutions to \eqref{go} in the case $N>m>1$,  $\alpha<N-m$} and $0<\mu\leq C_H$.
  \label{fig:case2_2}
  \end{center}
\end{figure}

\medskip

If $N\leq m$ we have the following result concerning \eqref{go}.

\begin{theorem}\label{thmain2}
Assume  $m>1$, $m\geq N\geq 1$,  $p>0$, $q,\mu\in \R$ and $\alpha\in (0, N)$. Then \eqref{go} has a positive solution if and only if the following conditions hold:

\begin{enumerate}
\item[{\rm(i)}] $\mu<0$;
\item[\rm (ii)] $p> \frac{\alpha}{|\beta^{-}|} $;
\item[\rm (iii)] $p+q>m-1+\frac{m+\alpha}{|\beta^-|}$;
\item[{\rm (iv)}] $q> m-1-\frac{N-m-\alpha}{|\beta^{-}|}$.
\end{enumerate}
\end{theorem}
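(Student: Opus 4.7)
The plan is to establish both directions in parallel with the proof of Theorem~\ref{thmain1}, with modifications reflecting the regime $m\geq N$. The key structural change is that $C_H=\bigl|(N-m)/m\bigr|^m$ is no longer a sharp threshold for the Hardy coefficient; instead, condition (i), $\mu<0$, is precisely what is needed for \eqref{bet} to admit a real, strictly negative root $\beta^-$. Only when $\beta^-<0$ does a decaying power-type barrier $|x|^{\beta^-}$ exist, and only then does the a priori lower bound of Lemma~\ref{l101} produce a non-trivial decay rate at infinity.

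For sufficiency, under (i)--(iv) the plan is to construct a radial supersolution of power type. The direct computation
\[
-\Delta_m\bigl(c|x|^{\beta}\bigr)-\frac{\mu}{|x|^m}\bigl(c|x|^{\beta}\bigr)^{m-1}
= c^{m-1}\bigl(f(\beta)-\mu\bigr)|x|^{\beta(m-1)-m},
\]
with $f(\beta)=-\beta|\beta|^{m-2}\bigl(\beta(m-1)+N-m\bigr)$, shows that the pure power with $\beta=\beta^-$ gives a vanishing left-hand side, so I would instead take a nearby exponent $\beta$ slightly displaced from $\beta^-$ so that $f(\beta)-\mu>0$ while preserving the decay profile. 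Condition (ii) guarantees that $I_\alpha\ast u^p$ converges at infinity and, depending on whether $p$ exceeds $N/|\beta^-|$ or not, produces the asymptotic behaviour $|x|^{p\beta^-+\alpha}$ or $|x|^{\alpha-N}$ respectively. Conditions (iii) and (iv) are exactly the exponent inequalities required to dominate $(I_\alpha\ast u^p)u^q$ by the left-hand side for sufficiently small $c$, covering the two regimes in turn.

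For necessity, assume $u$ is a positive solution. The main tool is the a priori spherical-average estimate of Lemma~\ref{l101}, which supplies a lower bound $\bar u(r)\geq c\,r^{\beta^-}$ for $r$ large. Condition (i) is then forced because, for $\mu\geq 0$ and $m\geq N$, equation \eqref{bet} admits no strictly negative root, contradicting the decay imposed by \eqref{c1}. Condition (ii) follows by inserting the lower bound into \eqref{c1}. Conditions (iii) and (iv) are extracted by testing \eqref{go} against a family of compactly supported cutoffs adapted to dyadic annuli, combining the resulting integral inequality with the spherical-average bound, and passing to the limit in the cutoff parameter: the failure of (iii) or (iv) would manifest itself as a divergent integral contradicting $(I_\alpha\ast u^p)u^q\in L^1_{\mathrm{loc}}$.

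The principal obstacle is the strict requirement $\mu<0$ in (i). Ruling out the borderline case $\mu=0$ is delicate because the Hardy term then disappears and \eqref{bet} has only the root $\beta=0$ (for $N=m$) or roots $0$ and $(m-N)/(m-1)\geq 0$ (for $N<m$); hence the spherical-average bound degrades to mere boundedness or slow growth, so the integrability argument used for $\mu<0$ does not apply directly. One therefore needs a refined two-scale argument, combining the growth of $I_\alpha\ast u^p$ with the integrability condition \eqref{c1}, to derive a contradiction in this borderline case.
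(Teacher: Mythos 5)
Your sufficiency half follows the paper's route (a pure power $u=\kappa|x|^{\gamma}$ with $\beta^-<\gamma<\min\{\beta^+,0\}$, which exists exactly because $\mu<0$ gives $\beta^-<0<\beta^+$), up to a slip: the two regimes for the Riesz potential are swapped — $p|\gamma|>N$ yields $I_\alpha*u^p\lesssim|x|^{\alpha-N}$ while $p|\gamma|<N$ yields $|x|^{\alpha+p\gamma}$, and the dichotomy must be run on the chosen exponent $\gamma$, not on $\beta^-$. The necessity half, however, has genuine gaps. First, the lower bound $u\geq c|x|^{\beta^-}$ is not a consequence of Lemma \ref{l101} (which only gives the upper bound \eqref{mainest1}); it is Proposition \ref{p1}, quoted from \cite{LLM2007}, valid for all $\mu\leq C_H$. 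Second, and more seriously, your treatment of condition (i) is incomplete: saying that for $\mu\geq 0$ equation \eqref{bet} has no negative root does not by itself contradict \eqref{c1}, and you explicitly leave the case $\mu=0$ (and implicitly $0<\mu\leq C_H$, which occurs when $N<m$ since $C_H>0$ there) unresolved, deferring to an unspecified ``refined two-scale argument.'' No such refinement is needed, but as written this direction is not proved. The paper's argument (Proposition \ref{mL2}(i1)) is short: for $0\leq\mu\leq C_H$ one has $\beta^-\geq 0$ by Lemma \ref{mu}(iii), hence $u\geq c|x|^{\beta^-}\geq c>0$ outside $B_2$ by Proposition \ref{p1}, so $\alpha+p\beta^-\geq\alpha>0$ and Lemma \ref{lbas}(ii) forces $I_\alpha*u^p\equiv\infty$, contradicting the definition of a solution; for $\mu>C_H$ nonexistence already holds for the homogeneous Hardy inequality, again by Proposition \ref{p1}. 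Your sketch covers neither situation.

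There is a second gap in the necessity of (iii) and (iv). These are strict inequalities, so the delicate point is to exclude the borderline equalities $p+q=m-1+\frac{m+\alpha}{|\beta^-|}$ and $q=m-1-\frac{N-m-\alpha}{|\beta^-|}$, and the dyadic-annuli test-function scheme you propose (essentially Lemma \ref{l1} plus H\"older) only rules out the strictly subcritical ranges; at equality the exponents match and no divergent integral appears. The paper needs two extra ingredients precisely here: for the equality case in (iii) (Proposition \ref{mL1}(ii), Case 2) one constructs the logarithmically corrected function $v=\kappa|x|^{\beta^-}\log^{\tau}(s|x|)$, applies the comparison principle of \cite{LLM2007} to get $u\geq v$, and contradicts the bound \eqref{etm1}; for the range covering the equality case in (iv) (Proposition \ref{mL1}(i)) one combines Lemma \ref{lbas}(i) with the Liouville theorem of \cite{LLM2007} (Proposition \ref{p2}) for $-\Delta_m u-\mu|x|^{-m}u^{m-1}\geq C|x|^{-\sigma}u^{q}$ with $\sigma=N-\alpha$. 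Neither appears in your outline, so the necessity of (iii)--(iv) as stated is not established by your plan.
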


The existence set in Theorem \ref{thmain2} is similar to the one depicted in Figure 1 above.
 
The remaining of our paper is organised as follows. In Section 2 we collect some preliminary facts which will be used in our approach. In Section $3$ we study the nonexistence results of solutions to the general quasilinear elliptic inequality \eqref{g1}. Finally, Section 4 contains the proofs of our main results.

\section{Preliminary results}

In this section we collect some preliminary facts that we use in our approach. We start with a
basic result on the behaviour of the function
\begin{equation}\label{G}
G(\beta) = - \beta |\beta|^{m-2} (\beta(m-1) + N-m)\,,\quad\beta\in \R.
\end{equation}

\begin{lemma} \label{mu}
Let $N\in \R$, $m>1$ and $C_H = \left| \frac{N-m}{m} \right|^m$.
\begin{enumerate}
\item[{\rm (i)}]
One has $G(\beta) \leq C_H$ for all $\beta\in\R$ with equality if and only if $\beta= \beta_* :=
\frac{m-N}{m}$.

\item[{\rm (ii)}]
If $\mu\leq C_H$, then the set of all real numbers $\beta$ such that $G(\beta) \geq \mu$ is a
finite interval $[\beta^- , \beta^+]$ with $\beta^- \leq \beta_* \leq \beta^+$.  In addition, one
has $\beta^- = \beta_* = \beta^+$ when $\mu=C_H$.

\item[{\rm (iii)}]
If $N>m$ and $\mu \leq C_H$, then $\beta^- \leq \beta^* <0$.  If $N\leq m$ and $\mu<0$, then
$\beta^- < 0 < \beta^+$.  If $N\leq m$ and $0\leq \mu\leq C_H$, then $0\leq \beta^- \leq
\beta^*$.

\end{enumerate}
\end{lemma}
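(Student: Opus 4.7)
The plan is to reduce all three parts to a single derivative analysis showing that $G$ is strictly unimodal on $\R$ with a unique global maximum at $\beta_*$, and then read off (ii) and (iii) by continuity and elementary sign analysis.

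First I differentiate $G(\beta)=-\beta|\beta|^{m-2}\bigl((m-1)\beta+N-m\bigr)$ on $\R\setminus\{0\}$, using $\tfrac{d}{d\beta}\bigl(\beta|\beta|^{m-2}\bigr)=(m-1)|\beta|^{m-2}$, to obtain
\begin{equation*}
G'(\beta)=-(m-1)|\beta|^{m-2}\bigl[(m-1)\beta+(N-m)+\beta\bigr]=-m(m-1)|\beta|^{m-2}(\beta-\beta_*).
\end{equation*}
Since $|\beta|^{m-2}>0$ for $\beta\neq 0$, this says $G'>0$ on $(-\infty,\beta_*)\setminus\{0\}$ and $G'<0$ on $(\beta_*,\infty)\setminus\{0\}$; continuity of $G$ at the origin (with $G(0)=0$) then upgrades this to strict monotonicity on the closed half-lines $(-\infty,\beta_*]$ and $[\beta_*,\infty)$. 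A direct substitution, using $(m-1)\beta_*+N-m=(N-m)/m$ and $\beta_*|\beta_*|^{m-2}=\operatorname{sgn}(\beta_*)\,|(N-m)/m|^{m-1}$, yields $G(\beta_*)=|(N-m)/m|^m=C_H$, which settles part (i).

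For (ii), I note that $G(\beta)\to -\infty$ as $|\beta|\to\infty$, since the leading term of $G$ is $-(m-1)|\beta|^m$. Combined with the unimodal structure and peak value $C_H$ attained only at $\beta_*$, the super-level set $\{G\geq\mu\}$ is a closed bounded interval $[\beta^-,\beta^+]$ containing $\beta_*$ whenever $\mu\leq C_H$, collapsing to $\{\beta_*\}$ precisely when $\mu=C_H$.

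For (iii), when $N>m$ we have $\beta_*=(m-N)/m<0$, so $\beta^-\leq\beta_*<0$ is immediate from (ii). When $N\leq m$ and $\beta<0$, both factors $-\beta|\beta|^{m-2}>0$ and $(m-1)\beta+(N-m)<0$ hold, so $G(\beta)<0$; hence $\mu\geq 0$ forces $\beta^-\geq 0$, yielding $0\leq\beta^-\leq\beta_*$ when $0\leq\mu\leq C_H$, while $\mu<0$ combined with $G(0)=0>\mu$ places $0$ strictly inside $(\beta^-,\beta^+)$, so $\beta^-<0<\beta^+$. There is no serious obstacle; the only finicky point is justifying strict monotonicity of $G$ on $(-\infty,\beta_*]$ and $[\beta_*,\infty)$ across the possible non-differentiability at $\beta=0$ when $1<m<2$, which is handled by continuity of $G$ together with strict monotonicity on each side of the origin.
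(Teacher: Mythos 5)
Your proposal is correct and follows essentially the same route as the paper: compute $G'(\beta)=-(m-1)|\beta|^{m-2}(m\beta+N-m)$, deduce that $G$ increases up to $\beta_*$ and decreases afterwards with maximum $G(\beta_*)=C_H$, and then read off (ii) and (iii) from $G(0)=0$ and $G(\beta)\to-\infty$ as $\beta\to\pm\infty$. Your write-up merely supplies details the paper leaves implicit (the sign analysis for $\beta<0$ when $N\leq m$ and the continuity argument across the possible non-differentiability at $\beta=0$ for $1<m<2$), so no further changes are needed.
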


\begin{proof}
Using the definition \eqref{G}, one easily find that
\begin{equation*}
G'(\beta) = -(m-1) |\beta|^{m-2} (m\beta + N -m).
\end{equation*}
In particular, $G$ is increasing on $(-\infty, \beta_*)$ and decreasing on $(\beta_*, \infty)$, so
its maximum value is $G(\beta_*) = C_H$.  Since $G(0)= 0$ and $G(\beta)\to -\infty$ as $\beta\to
\pm\infty$, the result follows easily.
\end{proof}

\begin{proposition}\label{p1}{\rm (See \cite[Theorem 3.4]{LLM2007})}
The inequality
\begin{equation}\label{eqA0}
-\Delta_m u - \frac{\mu}{|x|^m}u^{m-1} \geq 0 \quad\mbox{ in }\R^N\setminus \overline B_1,
\end{equation}
has positive solutions if and only if $\mu \leq C_H$, where $C_H$ is the optimal Hardy constant defined in \eqref{chh}. Furthermore, if $\mu \leq C_H$ and $u$ is a positive solution to \eqref{eqA0}, then there exists a constant $c>0$ such that
\begin{equation}\label{betam}
u\geq c|x|^{\beta^{-}} \quad\mbox{ in } \R^N\setminus B_2.
\end{equation}

\end{proposition}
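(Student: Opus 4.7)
The plan is to split the argument into three parts: sufficiency (existence when $\mu\le C_H$), necessity (nonexistence when $\mu>C_H$), and the quantitative lower bound.

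For sufficiency, I would test the radial ansatz $u(x)=|x|^\beta$. A direct computation in polar coordinates gives
$$
-\Delta_m\bigl(|x|^\beta\bigr)=-\beta|\beta|^{m-2}\bigl[\beta(m-1)+N-m\bigr]\,|x|^{\beta(m-1)-m}=G(\beta)\,|x|^{\beta(m-1)-m},
$$
while the Hardy term equals $\mu|x|^{\beta(m-1)-m}$. Thus $|x|^\beta$ is a classical positive supersolution of \eqref{eqA0} precisely when $G(\beta)\ge\mu$, which by Lemma \ref{mu} is solvable if and only if $\mu\le C_H$; the choice $\beta=\beta^-$ is particularly convenient.

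For necessity, assume $u>0$ satisfies \eqref{eqA0}. The Allegretto--Huang Picone inequality for the $m$-Laplacian,
$$
|\nabla u|^{m-2}\nabla u\cdot\nabla\Bigl(\frac{\psi^m}{u^{m-1}}\Bigr)\le|\nabla\psi|^m,
$$
valid for $u>0$ and any nonnegative $\psi\in C_c^\infty(\R^N\setminus\overline B_1)$, suggests inserting the test function $\phi=\psi^m/(u+\varepsilon)^{m-1}$ in the weak formulation of \eqref{eqA0}. Passing to the limit $\varepsilon\downarrow 0$ after a standard truncation and approximation yields the functional inequality
$$
\int_{\R^N\setminus\overline B_1}|\nabla\psi|^m\ge\mu\int_{\R^N\setminus\overline B_1}\frac{\psi^m}{|x|^m}
$$
for every admissible $\psi$. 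Since $C_H$ is the sharp constant in \eqref{hardy1} and its sharpness is attained in the limit by cutoffs of the Hardy extremal $|x|^{(m-N)/m}$ localised in annuli at large radii (well inside the exterior domain), the above forces $\mu\le C_H$, and $\mu>C_H$ is impossible.

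For the quantitative lower bound, I would compare $u$ with the explicit solution $v_c(x)=c|x|^{\beta^-}$ of $-\Delta_m v-\mu|x|^{-m}v^{m-1}=0$ produced in the sufficiency step. On the compact sphere $\partial B_2$ the positive continuous function $u$ has a positive infimum, so $c$ may be chosen small enough that $u\ge v_c$ on $\partial B_2$, and a weak comparison principle on the annuli $B_R\setminus\overline B_2$ (combined with $v_c(x)\to 0$ as $|x|\to\infty$ when $\beta^-<0$, and with a further shrinkage of $c$ near the outer boundary in the remaining cases) yields $u\ge v_c$ on $\R^N\setminus B_2$ after sending $R\to\infty$. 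The main obstacle is precisely this comparison step: for the $m$-Laplacian perturbed by the critical singular Hardy potential, classical weak comparison principles are not directly applicable, and one typically resorts to the substitution $w=u/v_c$, which removes the singular potential at the cost of introducing a degenerate weighted quasilinear equation for $w$ to which a more delicate comparison argument (of the type developed in \cite{LLM2007}) can be applied.
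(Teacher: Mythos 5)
The paper does not actually prove this proposition; it is quoted verbatim from \cite[Theorem 3.4]{LLM2007}, so your proposal is measured against the cited source's argument rather than anything in the text. Your first two steps are sound and follow the standard route: the radial computation $-\Delta_m|x|^\beta-\mu|x|^{-m}|x|^{\beta(m-1)}=(G(\beta)-\mu)|x|^{\beta(m-1)-m}$ is exactly Remark \ref{remk}(i), and combined with Lemma \ref{mu} it gives existence for $\mu\le C_H$; the Picone/Allegretto--Huang argument does yield $\int|\nabla\psi|^m\ge\mu\int|x|^{-m}\psi^m$ for all admissible $\psi$, and since the Hardy quotient is scale invariant the sharp constant on $\R^N\setminus\overline B_1$ is still $C_H$ (rescale any $\psi\in C_c^\infty(\R^N\setminus\{0\})$ to have support outside $B_1$), so $\mu>C_H$ is excluded. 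Modulo the routine approximation of the non-smooth test function, these two parts are correct.

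The genuine gap is in the lower bound \eqref{betam}, and it is precisely at the point you gloss over: the outer boundary in the comparison on $B_R\setminus\overline B_2$. When $\beta^-<0$, knowing $v_c\to0$ at infinity does not give $u\ge v_c$ on $\partial B_R$; the usual fix of comparing with $v_c-\epsilon$ fails for $0<\mu\le C_H$ because subtracting a constant destroys the subsolution property (the potential term $\mu|x|^{-m}(v_c-\epsilon)^{m-1}$ drops), and choosing $c=c_R$ so that $c_RR^{\beta^-}\le\min_{\partial B_R}u$ is circular, since whether $u$ can decay faster than $|x|^{\beta^-}$ is exactly what is being proved. Worse, in the regime $N\le m$, $0\le\mu\le C_H$ one has $\beta^-\ge0$ (Lemma \ref{mu}(iii)), so $v_c$ does not decay at all and ``shrinking $c$ near the outer boundary'' produces an $R$-dependent constant that degenerates as $R\to\infty$; yet this is the very case in which the paper uses \eqref{betam} (Proposition \ref{mL2}(i1)). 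The actual proof in \cite{LLM2007} resolves this by working with comparison functions adapted to each annulus that vanish on the outer sphere (equivalently, minimal radial solutions obtained by exhaustion, built from the two exponents $\beta^\pm$ or, at $\mu=C_H$, from $|x|^{\beta_*}$ and logarithmic corrections) and by showing these converge to a multiple of $|x|^{\beta^-}$ as $R\to\infty$, in combination with their weak comparison principle. Your sketch correctly names the comparison principle and the substitution $w=u/v_c$ as the relevant tools, but as written the passage from the annulus to the whole exterior domain with an $R$-independent constant is missing, and that is the substantive content of the cited theorem.
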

Our next result concerns the nonexistence of positive solutions to
\begin{equation}\label{eqA}
-\Delta_m u - \frac{\mu}{|x|^m}u^{m-1} \geq  \frac{C}{|x|^{\sigma}}u^{q} \quad\mbox{ in } \R^N\setminus \overline B_1.
\end{equation}

\begin{proposition}\label{p2}{\rm (See \cite[Sections 4.1 and 4.3]{LLM2007})}

Let $C>0$, $m>1$ and $\mu\leq C_H$. If one of the following conditions hold:

\begin{enumerate}
\item[{\rm (i)}] $q\geq m-1$ and $\sigma \leq \beta^{-}(q-m+1)+m$;

\item[{\rm (ii)}] $q< m-1$,  $\mu<C_H$ and $\sigma \leq \beta^{+}(q-m+1)+m$;

\item[{\rm (iii)}] $q< m-1$,  $\mu=C_H$ and $\sigma < \beta^{+}(q-m+1)+m$;

\item[{\rm (iv)}] $-1\leq q<m-1$, $\mu=C_H$ and  $\sigma =\beta^{+}(q-m+1)+m$;

\end{enumerate}
then \eqref{eqA} has no positive solution.
\end{proposition}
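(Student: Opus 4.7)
The plan is to adapt the strategy of \cite[Sections 4.1 and 4.3]{LLM2007}. In each of the four cases I would assume the existence of a positive solution $u$ and invoke the sharp pointwise lower bound $u(x)\geq c|x|^{\beta^-}$ in $\R^N\setminus\overline B_2$ furnished by Proposition \ref{p1} as the common starting point. The contradiction will then be obtained by showing that the inequality forces an enhanced Hardy-type coefficient that pushes past the critical value $C_H$, at which point the nonexistence part of Proposition \ref{p1} applies.

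For case (i), where $q-m+1\geq 0$ and $\beta^-<0$, raising the pointwise bound to the nonnegative power $q-m+1$ gives $u^{q-m+1}\geq c^{q-m+1}|x|^{\beta^-(q-m+1)}$; substituting into the right-hand side I obtain
\begin{equation*}
-\Delta_m u-\frac{\mu}{|x|^m}u^{m-1}\geq \frac{Cc^{q-m+1}}{|x|^{\sigma-\beta^-(q-m+1)}}u^{m-1}\quad\text{in }\R^N\setminus\overline B_2.
\end{equation*}
The hypothesis $\sigma\leq\beta^-(q-m+1)+m$ makes the exponent on $|x|$ at most $m$, so for $|x|\geq 2$ the inequality reads $-\Delta_m u\geq (\mu+C')|x|^{-m}u^{m-1}$ for some $C'>0$. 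Re-applying Proposition \ref{p1} with the boosted coefficient yields a strictly improved lower bound with exponent $\beta^-_1>\beta^-$; plugging this back makes the exponent $\sigma-\beta^-_1(q-m+1)$ strictly less than $m$, so the effective Hardy coefficient grows as a positive power of $|x|$ and eventually exceeds $C_H$ on $\R^N\setminus\overline B_R$ for $R$ large, contradicting Proposition \ref{p1}.

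For cases (ii)--(iv), where $q<m-1$, the same substitution yields only an upper bound on $u^{q-m+1}$ and is of no direct use. Instead I would construct a radial barrier $v(x)=|x|^{\beta^+}$ in the subcritical cases (ii) and (iii), augmented by a logarithmic factor $v(x)=|x|^{\beta_*}(\log|x|)^{1/m}$ in the marginal case (iv) with $\mu=C_H$, and apply a Picone-type identity to $u$ and $v$ tested against an annular cutoff supported in $\{R<|x|<R^2\}$. The threshold conditions on $\sigma$ are exactly those ensuring that, after integration and passage to $R\to\infty$, the interior integral diverges while the boundary contributions remain bounded, producing the contradiction. The hardest point will be the marginal case (iv): there $\beta^-=\beta^+=\beta_*$, the Hardy inequality is saturated, and the whole contradiction must be extracted from the logarithmic correction in $v$, which is precisely what forces the extra restriction $q\geq -1$ needed to keep the Picone-type boundary integrals integrable.
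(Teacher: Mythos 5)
The paper does not actually prove Proposition \ref{p2}; it is imported verbatim from \cite[Sections 4.1 and 4.3]{LLM2007}, so your sketch is in effect an attempt to reprove the cited result, and judged on those terms it has genuine gaps. For case (i) your two–step boost is essentially sound when $q>m-1$: feeding $u\ge c|x|^{\beta^-}$ into the right-hand side either pushes the effective Hardy constant above $C_H$ at once, or Proposition \ref{p1} (applied on $\R^N\setminus \overline B_2$, which needs the scaling/translation remark you omit) returns a strictly larger exponent $\beta_1^->\beta^-$, and the second pass produces a coefficient $\mu+C''|x|^{\varepsilon}$ with $\varepsilon=(q-m+1)(\beta_1^--\beta^-)>0$, giving the contradiction; this is close to the bootstrap of \cite{LLM2007}. (Your parenthetical ``$\beta^-<0$'' is neither needed nor true in general, e.g.\ $N\le m$, $0\le\mu\le C_H$.) However, the argument gives nothing at the admissible endpoint $q=m-1$, $\sigma=m$: there $u^{q-m+1}\equiv 1$, the boost is a one-shot fixed constant, and if $\mu+C\le C_H$ no contradiction is available — indeed $v=|x|^{\beta}$ with $G(\beta)=\mu+C$ then satisfies \eqref{eqA} with equality, so no iteration can close that corner in the stated generality; the ``linear'' exponent $q=m-1$ needs a separate treatment (trivial when $\sigma<m$, delicate and $C$-dependent when $\sigma=m$), which your proposal does not provide.

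The real substance of the proposition lies in the sublinear cases (ii)--(iv), and there your proposal is a plan rather than a proof. Pairing $u$ with the barrier $v=|x|^{\beta^+}$ (or a log-corrected barrier when $\mu=C_H$) via a Picone-type identity does not by itself exploit the forcing term $C|x|^{-\sigma}u^{q}$: since $q-m+1<0$, the lower bound $u\ge c|x|^{\beta^-}$ only bounds $u^{q-m+1}$ from above, and to make your ``interior integral diverge'' one needs a sharp \emph{upper} decay estimate for positive supersolutions (e.g.\ on spherical minima, of order $R^{\beta^+}$, with a logarithmic refinement when $\mu=C_H$), or else integral estimates of the type the paper derives in Lemma \ref{l101} and Lemma \ref{l1}; this ingredient is the engine of the argument in \cite{LLM2007} and is entirely absent from your sketch. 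You also do not show how the cutoff terms on the annulus $\{R<|x|<R^2\}$ are absorbed, why the stated thresholds on $\sigma$ ``exactly'' emerge, or how the restriction $q\ge-1$ in (iv) actually enters — asserting that it keeps boundary integrals integrable is a guess, not an argument; as a further symptom, the correct logarithmic power for the critical barrier is $2/m$ (compare the choice $\tau\in(0,2/m)$ in Case 2 of the proof of Theorem \ref{thmain1}), not $1/m$. As written, cases (ii)--(iv) remain unproved.
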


\begin{lemma}\label{lbas}
Let $\alpha\in (0,N)$, $p>0$ and $f\in L^1_{loc}(\R^N\setminus \overline{B}_1)\cap C(\R^N\setminus \overline{B}_1)$, $f\geq 0$.

\begin{enumerate}
\item[\rm (i)] There exists $C>0$ such that $$ (I_\alpha*f)(x)\geq C|x|^{\alpha-N}\quad\mbox{ for any }x\in \R^N\setminus B_2. $$

\item[\rm (ii)]
If $f(x)\geq c|x|^{\beta}$ in $\R^N\setminus B_1$ for some $c> 0$, then,
$$ \left\{
\begin{aligned}
&  (I_\alpha*f^p)(x)=\infty && \quad\mbox{ if } \; \alpha+p\beta\geq 0 \\
&  (I_\alpha*f^p)(x)\geq C|x|^{\alpha+p\beta} && \quad\mbox{ if } \; \alpha+p\beta< 0
\end{aligned}
\right. \quad\mbox{ in }\; \R^N\setminus B_1,
$$
for some $C>0$.
\item[\rm (iii)]
If $f(x)\leq c|x|^{\gamma}\log^\tau(s|x|)$ in $\R^N\setminus B_1$ for some $c>0$, $\tau\geq 0>\gamma$, $s>1$ and $\alpha+p\gamma<0$,  then
$$
(I_\alpha*f^p)(x)\leq
C\left\{
\begin{aligned}
&|x|^{\alpha+p\gamma}\log^{p\tau}(s|x|)&&\quad\mbox{ if }p|\gamma|<N\\
&|x|^{\alpha-N}\log^{1+p\tau}(s|x|)&&\quad\mbox{ if }p|\gamma|=N\\
&|x|^{\alpha-N}&&\quad\mbox{ if }p|\gamma|>N
\end{aligned}
\right.
 \quad\mbox{ in }\; \R^N\setminus B_1.
$$

\end{enumerate}

\end{lemma}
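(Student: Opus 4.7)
My plan is to establish all three estimates by direct manipulation of the Riesz convolution integral, paired with a standard decomposition of $\R^N\setminus\overline{B}_1$ into regions determined by the relative sizes of $|y|$ and $|x-y|$ compared with $|x|$.

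For part (i), since $f\geq 0$ is continuous and (implicitly) not identically zero, there is a compact set $E\subset B_R\setminus\overline{B}_1$ on which $\int_E f>0$. For $|x|\geq 2$ and $y\in E$ one has $|x-y|\leq |x|+R\leq (R+1)|x|$, hence $I_\alpha(x-y)\geq c|x|^{\alpha-N}$, and integrating over $E$ gives the desired lower bound.

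For part (ii), I replace $f^p(y)$ by $c^p|y|^{p\beta}$. If $\alpha+p\beta\geq 0$, I keep $x$ fixed and integrate over the annulus $\{R\leq |y|\leq 2R\}$, getting a contribution bounded below by $cR^{\alpha-N}\cdot R^{p\beta}\cdot R^N=cR^{\alpha+p\beta}$, which fails to tend to $0$ as $R\to\infty$, forcing divergence. If $\alpha+p\beta<0$, I restrict the convolution to the annulus $A_x=\{y:|x|\leq|y|\leq 2|x|\}$; there $|y|^{p\beta}\geq C|x|^{p\beta}$, $|x-y|\leq 3|x|$ and $|A_x|\sim |x|^N$, yielding the lower bound $C|x|^{\alpha-N}\cdot |x|^N\cdot |x|^{p\beta}=C|x|^{\alpha+p\beta}$.

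For part (iii), I use $f^p(y)\leq c^p|y|^{p\gamma}\log^{p\tau}(s|y|)$ and decompose the integration domain into $D_1=\{|y-x|\leq |x|/2\}$, $D_2=\{|y|\leq |x|/2\}\cap(\R^N\setminus\overline{B}_1)$ and $D_3=\{|y|\geq 2|x|\}$, together with an intermediate annular region where both $|y|$ and $|x-y|$ are comparable to $|x|$ and which is harmless. On $D_1$, $|y|$ and $\log(s|y|)$ are comparable to $|x|$ and $\log(s|x|)$ and $\int_{D_1}|x-y|^{\alpha-N}\,dy\sim |x|^\alpha$, giving a contribution $\lesssim |x|^{\alpha+p\gamma}\log^{p\tau}(s|x|)$. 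On $D_3$, $|x-y|\sim |y|$, so the contribution is bounded by $\int_{2|x|}^\infty r^{\alpha+p\gamma-1}\log^{p\tau}(sr)\,dr$, which converges (as $\alpha+p\gamma<0$) and behaves like $|x|^{\alpha+p\gamma}\log^{p\tau}(s|x|)$ via one integration by parts. On $D_2$, $|x-y|\sim |x|$, so the contribution reduces to $|x|^{\alpha-N}\int_1^{|x|/2}r^{p\gamma+N-1}\log^{p\tau}(sr)\,dr$; the three subcases of the conclusion arise directly from whether $p\gamma+N$ is positive, zero, or negative.

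The main obstacle is the case analysis in (iii): one must observe that when $p|\gamma|>N$ the integral over $D_2$ yields the bound $C|x|^{\alpha-N}$, which actually \emph{dominates} the $|x|^{\alpha+p\gamma}\log^{p\tau}$ contributions from $D_1$ and $D_3$ (since $p\gamma<-N$ forces $\alpha+p\gamma<\alpha-N$), while in the borderline case $p|\gamma|=N$ the integral $\int_1^{|x|/2}r^{-1}\log^{p\tau}(sr)\,dr$ contributes an extra logarithmic factor $\log^{1+p\tau}(s|x|)$ that then becomes the dominant one. Assembling the three contributions produces the three-line conclusion of (iii).
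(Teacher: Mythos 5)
Your proof is correct and follows essentially the same route as the paper: direct lower bounds over suitable bounded regions or annuli for (i)--(ii), and for (iii) a splitting of the convolution according to the relative sizes of $|y|$, $|x-y|$ and $|x|$, with the three subcases arising from the radial integral over the region $|y|\le \frac12|x|$. The differences are cosmetic (you use the annulus $|x|\le|y|\le 2|x|$ rather than the exterior set $|y|\ge 2|x|$ in (ii), and a four-region rather than three-region decomposition in (iii)); if anything, keeping $\log^{p\tau}(s|y|)$ inside the radial integral when $p|\gamma|>N$ is slightly cleaner than the paper's corresponding estimate.
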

\begin{proof}
(i) For any $x\in \R^N\setminus B_2$ we have
$$
(I_\alpha*f)(x)\geq C\int\limits_{3/2< |y|<2}\frac{f(y)}{|x-y|^{N-\alpha}}dy \geq C\int\limits_{3/2< |y|< 2}\frac{f(y)}{|2x|^{N-\alpha}}dy =
\frac{C}{|x|^{N-\alpha}}.
$$

(ii) We have $$ (I_\alpha*f^p)(x)\geq C\int\limits_{|y|\geq2|x|}\frac{|y|^{p\beta}}{|x-y|^{N-\alpha}}dy \geq C\int\limits_{|y|\geq 2|x|} |y|^{\alpha-N+p\beta} dy= C\int\limits_{2|x|}^{\infty}t^{\alpha+p \beta}\frac{dt}{t}, $$

and the conclusion follows.

(iii) For $x\in \R^N\setminus B_1$  we estimate
\begin{equation*}
(I_\alpha*f^p)(x) \leq \int_{|y|\geq 2|x|} \frac{A_\alpha f(y)^p \,dy}{|x-y|^{N-\alpha}} +
\int_{\frac12 |x| \leq |y| \leq 2|x|} \frac{A_\alpha f(y)^p \,dy}{|x-y|^{N-\alpha}} +
\int_{\frac12 |x| \geq |y|} \frac{A_\alpha f(y)^p \,dy}{|x-y|^{N-\alpha}}.
\end{equation*}
For the first integral on the right-hand side, one has $|y|\geq 2|x|$, so $|x-y| \geq |y|-|x| \geq \frac12 |y|$ and
\begin{equation*}
\int_{|y|\geq 2|x|} \frac{A_\alpha f(y)^p \,dy}{|x-y|^{N-\alpha}}
\leq C \int_{|y|\geq 2|x|} \frac{\log^{p\tau} (s|y|) \,dy}{|y|^{N-\alpha-p\gamma}}
\leq C|x|^{\alpha + p\gamma} \log^{p\tau}(s|x|).
\end{equation*}
The exact same estimate holds for the second integral because
\begin{align*}
\int_{\frac12 |x| \leq |y| \leq 2|x|} \frac{A_\alpha f(y)^p \,dy}{|x-y|^{N-\alpha}}
&\leq C|x|^{p\gamma} \log^{p\tau}(s|x|) \int_{\frac12 |x| \leq |y| \leq 2|x|}
\frac{dy}{|x-y|^{N-\alpha}} \\
&\leq C|x|^{\alpha + p\gamma} \log^{p\tau}(s|x|).
\end{align*}
Next, we turn to the third integral.  If $\frac12 |x|\geq |y|$, then
$|x-y| \geq |x|-|y| \geq \frac12 |x|$, so
\begin{equation*}
\int_{\frac12 |x| \geq |y|} \frac{A_\alpha f(y)^p \,dy}{|x-y|^{N-\alpha}} \leq
C|x|^{\alpha-N} \log^{p\tau} (s|x|) \int_{\frac12 |x| \geq |y|} |y|^{p\gamma} \,dy.
\end{equation*}
The result now follows by considering the cases $p\gamma>-N$, $p\gamma=-N$ and $p\gamma<-N$.
\end{proof}

\begin{remark}\label{remk}
A direct and useful calculation shows that:
\begin{enumerate}
\item[{\rm (i)}] If $u(x)=\kappa |x|^{\gamma}$, $\gamma\in \R$, $\kappa>0$
then
\begin{equation}\label{radial1}
-\Delta_m u -\frac{\mu}{|x|^m}u^{m-1}= \kappa^{m-1} (G(\gamma)-\mu)|x|^{\gamma(m-1)-m} \quad\mbox{ in }\R^N\setminus B_1,
\end{equation}
where $G$ is defined in \eqref{G}.
\item[{\rm (ii)}] If $u(x)=\kappa |x|^{\gamma} \log^\tau \big(s|x|\big)$ where
$$
\tau\in \R\,,\quad \kappa>0>\gamma\quad \mbox{ and }\quad |\gamma|\log s>\tau,
$$
then
$$
\begin{aligned}
-\Delta_m u -\frac{\mu}{|x|^m}u^{m-1}=&
\kappa^{m-1}|x|^{\gamma(m-1)-m} \log^{\tau(m-1)}\big(s |x|\big)\times\\
&\times \left[-\mu+ \Big( |\gamma|-\frac{\tau} {\log\big(s|x|\big)}\Big)^{m-2}\Big(a+\frac{b}{\log\big(s|x|\big)}+\frac{c}{\log^2\big(s|x|\big)}\Big)\right],
\end{aligned}
$$
where
\begin{equation}\label{abc1}
\begin{aligned}
a&=|\gamma|\big[\gamma(m-1)+(N-m)  \big],\\
b&=-\tau\big[2\gamma(m-1)+(N-m)\big],\\
c&=-\tau(\tau-1)(m-1).
\end{aligned}
\end{equation}
Furthermore,
\begin{equation}\label{radial2}
\begin{aligned}
-\Delta_m u -\frac{\mu}{|x|^m}u^{m-1}=&
\kappa^{m-1}|x|^{\gamma(m-1)-m} \log^{\tau(m-1)}\big(s |x|\big)\times\\
&\times \left[A +\frac{B}{\log\big(s|x|\big)}+\frac{C}{\log^2\big(s|x|\big)}+\dots \right]
\quad\mbox{ in }\R^N\setminus B_1,
\end{aligned}
\end{equation}
where
\begin{equation}\label{abc2}
\begin{aligned}
A&=G(\gamma)-\mu,\\
B&=|\gamma|^{m-3}\big[b|\gamma|-(m-2)a\tau\big]=-\tau|\gamma|(m-1)\big[ m\gamma+(N-m)\big],\\
C&=|\gamma|^{m-4}\Big[ c\gamma^2+b\gamma\tau (m-2)+\frac{(m-2)(m-3)}{2}a\tau^2\Big].
\end{aligned}
\end{equation}
\end{enumerate}

\end{remark}

\section{Non-existence results for general differential operators}

We establish some nonexistence results for solutions to a general quasilinear elliptic inequality
\begin{equation} \label{gog}
-{\rm div}[\mathcal{A}(x, u, \nabla u)] - \frac{\mu}{ |x|^{\theta}} u^{m-1} \geq (I_\alpha \ast u^p)u^q
\quad\mbox{ in } \R^N\setminus \overline B_1,
\end{equation}
where $\mu,\theta\in\R$ and $\mathcal{A}$ is {\it weakly-m-coercive} (in short $W$-$m$-$C$) as stated in \eqref{wmcc}. 
Typical examples that satisfy \eqref{wmcc} are the standard $m$-Laplace operator
$$
\mathcal{A}(x, u, \eta)= |\eta|^{m-2}\eta\,, \quad m>1
$$
and the $m$-mean curvature operator given by
$$
\mathcal{A}(x, u, \eta)= \frac{|\eta|^{m-2}}{\sqrt{1+|\eta|^m}}\eta\,, \quad m\geq 2.
$$
In order to establish the nonexistence results for \eqref{gog} we first obtain
a priori estimates for solutions $u\in C(\R^N\setminus \overline B_1)\cap W^{1,1}_{loc}(\R^N\setminus \overline B_1)$ of the
general inequality
\begin{equation}\label{ff}
-{\rm div}[\mathcal{A}(x, u, \nabla u)]  -\frac{\mu}{ |x|^{\theta}} u^{m-1} \geq f(x)\quad\mbox{ in } \R^N\setminus \overline B_1,
\end{equation}
where $f\in L^1_{loc}(\R^N\setminus \overline B_1)$ and $\mu,\theta\in\R$. Solutions of \eqref{ff} are understood in
the weak sense, that is,
\begin{itemize}
\item $\mathcal{A}(x, u, \nabla u)\in L^{1}_{loc}(\R^N\setminus \overline B_1)^{N}$;
\item  $\frac{\mu}{ |x|^{\theta}} u^{m-1}
\in L^{1}_{loc}(\R^N\setminus \overline B_1)$;
\item for any function $\varphi \in C_{c}^{\infty}(\R^N\setminus \overline B_1)$ with $\varphi\geq 0$, one has
\begin{equation}\label{var}
\int_{\R^N\setminus \overline B_1} \mathcal A(x, u, \nabla u)\cdot \nabla \varphi
- \int_{\R^N\setminus \overline B_1}  \mu  |x|^{-\theta} u^{m-1} \varphi \geq \int_{\R^N\setminus \overline B_1} f(x) \varphi.
\end{equation}
\end{itemize}
In our first result of this section we provide a priori estimates for solutions of \eqref{gog}.

\begin{lemma}\label{l101}
Assume $\mathcal{A}$ is $W$-$m$-$C$ for some $m>1$ and $u$ is a positive solution of \eqref{ff}.
Fix a test function $\phi\in C^\infty_c( \R^N\setminus \overline B_1)$ such that $0\leq \phi\leq 1$ and
\begin{equation} \label{test}
\text{ supp$\,\phi\subset B_{4R}\setminus B_{R/2}$, \quad
$\phi=1$ in $B_{2R}\setminus B_{R}$, \quad
$|\nabla \phi|\leq C/R$ in $\R^N\setminus \overline B_1$, }
\end{equation}
where $C>0$ and $R>2$.
Then, for any $\lambda>m$, there exists a constant $C>0$ independent of $R$ such that
\begin{equation}\label{mainest1}
\int_{\R^N\setminus \overline B_1} f(x) u^{1-m} \phi^\lambda \leq C R^{N-m} + CR^{N-\theta}.
\end{equation}
\end{lemma}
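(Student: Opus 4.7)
The plan is to test the weak formulation \eqref{var} against
\[
\varphi = u^{1-m}\phi^{\lambda},
\]
which is the standard homogeneity-cancelling choice for $m$-Laplace-type inequalities. Its two virtues are: (a) the Hardy contribution becomes purely geometric, since $u^{m-1}\varphi = \phi^{\lambda}$, so the $\mu$-term is easy to control; (b) the pairing of $\mathcal{A}$ with the $\nabla u$-part of $\nabla\varphi$ carries a coefficient $u^{-m}\phi^{\lambda}$, which combines with the weak $m$-coercivity bound $\mathcal{A}\cdot\nabla u\geq c|\mathcal{A}|^{m'}$ from \eqref{wmcc} to produce a positive, absorbable quantity. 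Although $\varphi$ is not $C^{\infty}_c$, the positivity and continuity of $u$ imply that $u^{1-m}$ is bounded above and below on $\operatorname{supp}\phi$, so $\varphi$ is admissible after a routine mollification/truncation of $u$ (e.g.\ replace $u$ by $u+\varepsilon$, test, and pass to the limit using $\mathcal{A}(x,u,\nabla u)\in L^{1}_{loc}$).

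Differentiating $\varphi$ and substituting into \eqref{var} yields, after rearrangement,
\[
\int f\, u^{1-m}\phi^{\lambda} + (m-1)\int u^{-m}\phi^{\lambda}\,\mathcal{A}\cdot\nabla u \leq \lambda\int u^{1-m}\phi^{\lambda-1}\,\mathcal{A}\cdot\nabla\phi + |\mu|\int \frac{\phi^{\lambda}}{|x|^{\theta}}.
\]
Invoking \eqref{wmcc} turns the second term on the left into the good quantity $(m-1)c\int u^{-m}\phi^{\lambda}|\mathcal{A}|^{m'}$. The cross term on the right splits, using $m/m'=m-1$ (hence $1-m+m/m'=0$), as
\[
|\mathcal{A}|\,u^{1-m}\phi^{\lambda-1}|\nabla\phi| = \bigl(|\mathcal{A}|\,u^{-m/m'}\phi^{\lambda/m'}\bigr)\bigl(\phi^{\lambda/m-1}|\nabla\phi|\bigr),
\]
which is well-defined since $\lambda>m$ keeps $\phi^{\lambda/m-1}$ bounded. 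Young's inequality with conjugate exponents $m',m$ and a small parameter $\varepsilon>0$ gives
\[
\lambda\int u^{1-m}\phi^{\lambda-1}\,\mathcal{A}\cdot\nabla\phi \leq \varepsilon\int u^{-m}\phi^{\lambda}|\mathcal{A}|^{m'} + C_{\varepsilon}\int \phi^{\lambda-m}|\nabla\phi|^{m},
\]
and choosing $\varepsilon<(m-1)c$ absorbs the first piece into the good term.

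The remaining integrals are purely geometric: $\phi^{\lambda-m}\leq 1$ together with $|\nabla\phi|\leq C/R$ and $|\operatorname{supp}\phi|\lesssim R^{N}$ gives $\int \phi^{\lambda-m}|\nabla\phi|^{m}\lesssim R^{N-m}$, while $|x|\sim R$ on $\operatorname{supp}\phi$ gives $\int |x|^{-\theta}\phi^{\lambda}\lesssim R^{N-\theta}$. Combining these with the absorption step produces \eqref{mainest1}. The main obstacle is the admissibility of the $u$-dependent test function $\varphi = u^{1-m}\phi^{\lambda}$: the weak regularity $u\in C\cap W^{1,1}_{loc}$ does not by itself license this choice, and it must be justified by the approximation scheme indicated above. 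Once that technical point is handled, the remainder is a clean Caccioppoli-type energy argument driven entirely by the $W$-$m$-$C$ structural condition \eqref{wmcc} and Young's inequality, with no further input from the right-hand side of \eqref{gog}.
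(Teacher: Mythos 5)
Your proposal is correct and follows essentially the same route as the paper: the Bidaut-V\'eron--Pohozaev choice $\varphi=u^{1-m}\phi^{\lambda}$, the weak $m$-coercivity bound \eqref{wmcc} to control the $\mathcal{A}\cdot\nabla u$ term, Young's inequality to absorb the cross term $\lambda\phi^{\lambda-1}u^{1-m}\mathcal{A}\cdot\nabla\phi$, and the geometric bounds $CR^{N-m}$ and $CR^{N-\theta}$ from the properties \eqref{test} of $\phi$. Your additional remark on justifying the admissibility of the $u$-dependent test function by approximation is a sound technical refinement that the paper leaves implicit.
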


\begin{proof} We follow the method of test functions devised in \cite{BP2001}.
Using $\varphi = u^{1-m} \phi^\lambda$ in \eqref{var}, one finds that
\begin{align} \label{proof0}
\int_{\R^N\setminus \overline B_1} f(x) u^{1-m} \phi^\lambda
&\leq \int_{\R^N\setminus \overline B_1} (1-m)u^{-m} \phi^\lambda \mathcal A(x, u, \nabla u) \cdot \nabla u \notag\\
&\qquad + \int_{\R^N\setminus \overline B_1} \lambda \phi^{\lambda-1} u^{1-m} \mathcal A(x, u, \nabla u) \cdot \nabla \phi - \mu \int_{\R^N\setminus \overline B_1}  |x|^{-\theta} \phi^\lambda.
\end{align}
Since $m>1$, the property \eqref{wmcc} implies
\begin{equation*}
(1-m) u^{-m} \phi^\lambda \mathcal A(x, u, \nabla u) \cdot \nabla u
\leq (1-m)C_1 u^{-m} \phi^\lambda |\mathcal A(x, u, \nabla u)|^{m'}.
\end{equation*}
On the other hand, Young's inequality ensures that
\begin{align*}
\lambda \phi^{\lambda-1} u^{1-m} \mathcal A(x, u, \nabla u)\cdot \nabla \phi
&\leq (m-1)C_1 u^{-m} \phi^\lambda |\mathcal A(x, u, \nabla u)|^{m'} \\
&\qquad +C(\lambda,m) \phi^{\lambda-m}|\nabla \phi|^m.
\end{align*}
Adding the last two estimates and returning to \eqref{proof0}, we conclude that
\begin{equation*}
\int_{\R^N\setminus \overline B_1} f(x) u^{1-m} \phi^\lambda \leq C(\lambda,m)
\int_{\R^N\setminus \overline B_1} \phi^{\lambda-m}|\nabla \phi|^m -\mu \int_{\R^N\setminus \overline B_1} |x|^{-\theta} \phi^\lambda.
\end{equation*}
The result now follows easily using the properties \eqref{test} of the test function $\phi$.
\end{proof}

As a consequence of the above result we find:
\begin{lemma} \label{l1}
Assume $\mathcal A$ is $W$-$m$-$C$ for some $m> 1$ and let $u$ be a positive solution of
\eqref{gog}.  Then
\begin{equation*}
\Big(\int_{B_{2R}\setminus B_1}u^p\Big)\Big(\int_{B_{2R}\setminus B_R}u^{q-m+1}\Big)
\leq CR^{2N-\min(\theta,m)-\alpha} \quad\mbox{ for all }R> 2.
\end{equation*}
\end{lemma}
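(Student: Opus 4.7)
The plan is to combine the a priori bound of Lemma~\ref{l101} with a direct geometric lower bound for the Riesz convolution on the annulus $B_{2R}\setminus B_R$. First I would observe that any positive solution $u$ of \eqref{gog} is in particular a weak solution of \eqref{ff} with
$$
f(x)=(I_\alpha\ast u^p)(x)\,u^q(x),
$$
which lies in $L^1_{loc}(\R^N\setminus \overline B_1)$ by the very definition of a positive solution. Applying Lemma~\ref{l101} with this $f$, a fixed $\lambda>m$ and a test function $\phi$ obeying \eqref{test} yields
$$
\int_{\R^N\setminus \overline B_1} (I_\alpha\ast u^p)\,u^{q-m+1}\,\phi^\lambda \leq CR^{N-m}+CR^{N-\theta}\leq C R^{N-\min(\theta,m)}
\quad\mbox{ for all } R>2.
$$

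The second step is to produce a matching lower bound on the left-hand side. Since $\phi\equiv 1$ on $B_{2R}\setminus B_R$, one has
$$
\int_{\R^N\setminus \overline B_1} (I_\alpha\ast u^p)\,u^{q-m+1}\,\phi^\lambda \geq \int_{B_{2R}\setminus B_R}(I_\alpha\ast u^p)(x)\,u^{q-m+1}(x)\,dx.
$$
To factor this as the product of integrals appearing in the statement, I would restrict the convolution integral defining $(I_\alpha\ast u^p)(x)$ to $y\in B_{2R}\setminus B_1$ and use $|x-y|\leq 4R$ for $x\in B_{2R}\setminus B_R$ and $y\in B_{2R}\setminus B_1$, to obtain
$$
(I_\alpha\ast u^p)(x)\geq \frac{A_\alpha}{(4R)^{N-\alpha}}\int_{B_{2R}\setminus B_1} u^p(y)\,dy = C\,R^{\alpha-N}\int_{B_{2R}\setminus B_1} u^p.
$$
Inserting this into the previous display and combining with the upper bound from Lemma~\ref{l101} produces the claim after cancelling the factor $R^{\alpha-N}$ against $R^{N-\min(\theta,m)}$, which is exactly the exponent $R^{2N-\min(\theta,m)-\alpha}$ in the statement.

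There is no serious obstacle: Lemma~\ref{l101} does all of the PDE-theoretic heavy lifting, and the remaining manipulation is the elementary observation that the nonlocal term separates into the product $\int u^p\cdot\int u^{q-m+1}$ as soon as both $x$ and $y$ are constrained to balls of radius comparable to $R$. The only point that requires a moment of care is the bookkeeping of the two different annuli: the outer variable $x$ must sit in the region where $\phi\equiv 1$, namely $B_{2R}\setminus B_R$, while the inner variable $y$ may roam over the larger annulus $B_{2R}\setminus B_1$, and this asymmetry is precisely what is reflected in the two distinct integration domains on the left-hand side of the stated inequality.
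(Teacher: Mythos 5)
Your argument is correct and is essentially identical to the paper's proof: both apply Lemma \ref{l101} with $f=(I_\alpha*u^p)u^q$ to get the upper bound $CR^{N-\min(\theta,m)}$ over the annulus where $\phi\equiv 1$, and then bound $(I_\alpha*u^p)(x)$ from below by $CR^{\alpha-N}\int_{B_{2R}\setminus B_1}u^p$ using $|x-y|\leq 4R$. No gaps; the bookkeeping of the two annuli is exactly as in the paper.
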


\begin{proof}
According to Lemma \ref{l101}, one has
\begin{equation*}
\int_{B_{2R}\setminus B_R} (I_\alpha*u^p) u^{q-m+1} \leq CR^{N-\min(\theta,m)}
\quad\mbox{ for all }R> 2.
\end{equation*}
If $x\in B_{2R}\setminus B_R$ and $y\in B_{2R}\setminus B_1$, then $|x-y|\leq |x|+|y|\leq 4R$ so
\begin{equation*}
(I_\alpha*u^p)(x) \geq C\int_{B_{2R}\setminus B_{1}} \frac{u(y)^p}{|x-y|^{N-\alpha}} \,dy
\geq CR^{\alpha-N} \int_{B_{2R}\setminus B_1} u(y)^p \,dy.
\end{equation*}
The result now follows easily by combining the last two estimates.
\end{proof}

We are now ready to state and prove the main result of this section which concerns the nonexistence of positive solutions to inequality \eqref{gog}.

\begin{theorem}\label{thm2}
Assume $\mathcal{A}$ is $W$-$m$-$C$ for some $m> 1$ and
one of the following conditions holds.
\begin{itemize}
\item[\rm (i)] $p+q>m-1$, $q\leq m-1$ and $q< m-1 + \frac{\min(\theta,m)+\alpha-N}{N}p$;
\item[\rm (ii)] $p+q>m-1$, $q< m-1$ and $q= m-1 + \frac{\min(\theta,m)+\alpha-N}{N}p$;
\item[\rm (iii)] $p+q=m-1$ and $\min(\theta,m) > -\alpha$;
\item[\rm (iv)] $p+q<m-1$ and $\min(\theta,m) \geq -\alpha$.
\end{itemize}
Then  \eqref{gog} has no positive solutions.
\end{theorem}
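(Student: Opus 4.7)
The plan is to argue by contradiction: assume $u$ is a positive solution of \eqref{gog} under the hypotheses of one of (i)--(iv), and combine Lemma \ref{l1} with a H\"older inequality on the annulus $B_{2R}\setminus B_{R}$ to derive an impossibility. I would introduce $A(R):=\int_{B_{2R}\setminus B_1} u^p$, $B(R):=\int_{B_{2R}\setminus B_R} u^{q-m+1}$, and $\theta^*:=\min(\theta,m)$, so that Lemma \ref{l1} reads $A(R)B(R)\leq CR^{2N-\theta^*-\alpha}$. The second ingredient is the identity $1=u^{a}u^{-a}$ with $a=pk/(p+k)$, $k:=m-1-q$, which by H\"older produces $cR^N\leq A(R)^{s}B(R)^{1-s}$ with $s=k/(p+k)$ whenever $q<m-1$ (and $B(R)\geq cR^N$ directly when $q=m-1$).

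For cases (i) and (ii), where $p+q>m-1$ and hence $s<1/2$: if $q=m-1$ then Lemma \ref{l1} immediately gives $A(R)\leq CR^{N-\theta^*-\alpha}$, and the hypothesis (which reduces here to $\theta^*+\alpha>N$) forces $A(R)\to 0$, contradicting $u>0$. If $q<m-1$ one substitutes the H\"older bound into Lemma \ref{l1} to obtain
\[
A(R)^{(p-k)/p}\leq CR^{E},\qquad E=\tfrac{1}{p}\bigl[Np-p(\theta^*+\alpha)-Nk\bigr].
\]
The inequality in (i) is equivalent to $E<0$, so $A(R)\to 0$ and again $u\equiv 0$. At the critical line (case (ii)) one has $E=0$, giving only $A(R)\leq C$, i.e., $u\in L^{p}(\R^N\setminus\overline B_1)$; to upgrade this to a contradiction one sharpens Lemma \ref{l1} by restricting the $y$-integration in the pointwise bound $(I_{\alpha}*u^{p})(x)\geq CR^{\alpha-N}\int u^{p}$ (valid for $x\in B_{2R}\setminus B_R$) to $y\in B_{2R}\setminus B_R$ itself, for which $|x-y|\leq 4R$ still holds. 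This yields $\epsilon(R)B(R)\leq CR^{2N-\theta^*-\alpha}$ with $\epsilon(R):=\int_{B_{2R}\setminus B_{R}}u^{p}\to 0$, and a dyadic iteration of the H\"older comparison exploiting this strict decay then closes the critical case.

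Case (iii), $p+q=m-1$: then $k=p$ and H\"older reduces to the Cauchy--Schwarz form $cR^{2N}\leq A(R)B(R)$; combined with Lemma \ref{l1} this gives $R^{\theta^*+\alpha}\leq C$, contradicting $\theta^*+\alpha>0$ for $R$ large. Case (iv), $p+q<m-1$: now $k>p$ and the sharper H\"older reads $cR^{N(k+p)}\leq A(R)^{k}B(R)^{p}$; dividing by the $p$-th power of Lemma \ref{l1} yields $A(R)^{k-p}\geq cR^{N(k-p)+p(\theta^*+\alpha)}$, hence $A(R)\geq cR^{N}$ under $\theta^*+\alpha\geq 0$. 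On the other hand, the integrability $\int u^{p}(y)/(1+|y|^{N-\alpha})\,dy<\infty$ built into the notion of solution immediately forces $A(R)\leq CR^{N-\alpha}$; since $\alpha>0$, these two bounds are incompatible for $R$ large.

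The main obstacle is the critical case (ii): the direct H\"older--test-function comparison only yields the finite bound $A(R)\leq C$ rather than $A(R)\to 0$, so closing the argument requires the annular refinement of Lemma \ref{l1} together with a careful dyadic iteration exploiting the strict decay $\epsilon(R)\to 0$. The remaining cases are essentially algebraic, with case (iv) additionally relying on the a priori bound $A(R)\leq CR^{N-\alpha}$ coming from the definition of solution.
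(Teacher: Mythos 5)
Your cases (i), (iii) and (iv) are correct and follow essentially the paper's own route: the product estimate of Lemma \ref{l1} on dyadic annuli combined with H\"older's inequality on $B_{2R}\setminus B_R$, plus (in case (iv)) the integrability condition \eqref{c1} to bound $\int_{B_{2R}\setminus B_1}u^p\leq CR^{N-\alpha}$. Your derivation in the critical case (ii) of the uniform bound $A(R)\leq C$, hence $u\in L^p(\R^N\setminus\overline B_1)$ and $\epsilon(R):=\int_{B_{2R}\setminus B_R}u^p\to0$, is also valid.

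The gap is in how you propose to finish case (ii). Write $\theta^*=\min(\theta,m)$ and $s=\frac{m-1-q}{p+m-1-q}\in(0,\tfrac12)$. Combining your sharpened estimate $\epsilon(R)B(R)\leq CR^{2N-\theta^*-\alpha}$ with the H\"older inequality $cR^N\leq \epsilon(R)^s B(R)^{1-s}$ gives, at the critical exponent (where $(1-s)(2N-\theta^*-\alpha)=N$), only $\epsilon(R)^{2s-1}\geq c$, i.e.\ an \emph{upper} bound $\epsilon(R)\leq C$, which is perfectly compatible with $\epsilon(R)\to0$: for instance $\epsilon(R)=1/\log R$ and $B(R)=c'R^{N/(1-s)}(\log R)^{s/(1-s)}$ satisfy both inequalities for all large $R$, so no dyadic iteration of these two facts alone can yield a contradiction. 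The sharpening actually works against you, since replacing $\int_{B_{2R}\setminus B_1}u^p$ by the smaller $\epsilon(R)$ weakens the upper bound on $B(R)$, which is the quantity you need to cap. The correct completion keeps the unsharpened Lemma \ref{l1}: since $\int_{B_{2R}\setminus B_1}u^p\geq\int_{B_4\setminus B_1}u^p>0$, one gets $B(R)\leq CR^{2N-\theta^*-\alpha}$, and then H\"older at the critical exponent gives the uniform lower bound $\epsilon(R)\geq c>0$ for all $R>2$; summing over dyadic annuli contradicts your bound $A(R)\leq C$. This is in substance what the paper does, phrased there as $\int_{B_{2R}\setminus B_1}u^p\to\infty$, whence $B(R)=o(R^{2N-\theta^*-\alpha})$, contradicting the matching lower bound $B(R)\geq cR^{2N-\theta^*-\alpha}$ of \eqref{ee0}.
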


\begin{proof}
(i) Assume first that $q= m-1$ which also implies $N< \min(\theta,m)+\alpha$. By Lemma \ref{l1},
\begin{equation*}
\int_{B_{2R}\setminus B_1} u^p dx\leq CR^{N-\min(\theta,m)-\alpha} \quad\mbox{ for all }R> 2.
\end{equation*}
Letting $R\to \infty$ in the above estimate, we conclude that \eqref{go} has no positive solutions.

Assume next that $q< m-1$.  Using H\"older's inequality, we estimate
\begin{equation}\label{pr1}
\int_{B_{2R}\setminus B_R} 1 \leq \Big(\int_{B_{2R}\setminus B_R}u^p\Big)^{\frac{m-1-q}{p+m-1-q}}
\Big(\int_{B_{2R}\setminus B_R}u^{q-m+1}\Big)^{\frac{p}{p+m-1-q}},
\end{equation}
which we rewrite as
\begin{equation*}
CR^N\leq \Big[\Big(\int_{B_{2R}\setminus B_R}u^p\Big)\Big(\int_{B_{2R}\setminus B_R}u^{q-m+1}\Big)
\Big]^{\frac{m-1-q}{p+m-1-q}}\Big(\int_{B_{2R}\setminus B_R}u^{q-m+1}\Big)^{\frac{p-m+1+q}{p+m-1-q}}.
\end{equation*}
Now, by Lemma \ref{l1} we deduce that
\begin{equation*}
CR^N\leq C\big(R^{2N-\min(\theta,m)-\alpha}\big)^{\frac{m-1-q}{p+m-1-q}}
\Big(\int_{B_{2R}\setminus B_R}u^{q-m+1}\Big)^{\frac{p-m+1+q}{p+m-1-q}}
\end{equation*}
for all $R>2$ which yields
\begin{equation}\label{pr2}
\int_{B_{2R}\setminus B_R}u^{q-m+1}
\geq CR^{N+\frac{(\min(\theta,m)+\alpha)(m-1-q)}{p+q-m+1}} \quad\mbox{ for all }R> 2.
\end{equation}
Again by Lemma \ref{l1} we have
\begin{equation}\label{pr3}
\Big(\int_{B_{2R}\setminus B_1}u^{p}\Big)\Big(\int_{B_{2R}\setminus B_R}u^{q-m+1}\Big)
\leq CR^{2N-\min(\theta,m)-\alpha} \quad\mbox{ for all }R> 2.
\end{equation}
Therefore,
\begin{equation}\label{pr4}
\int_{B_{2R}\setminus B_R} u^{q-m+1}
\leq \frac{CR^{2N-\min(\theta,m)-\alpha}}{\int_{B_{4}\setminus B_1}u^p}
\leq CR^{2N-\min(\theta,m)-\alpha}.
\end{equation}
From \eqref{pr2} and \eqref{pr4} we deduce
\begin{equation*}
C_1 R^{N+\frac{(\min(\theta,m)+\alpha)(m-1-q)}{p+q-m+1}}\leq \int_{B_{2R}\setminus B_R}u^{q-m+1}
\leq C_2 R^{2N-\min(\theta,m)-\alpha} \quad\mbox{ for all }R> 2.
\end{equation*}
Since $q< m-1 +\frac{\min(\theta,m)+\alpha-N}{N}p$, the above inequality cannot hold for large
enough $R> 2$. Hence, equation \eqref{gog} cannot have positive solutions.

(ii) When $q<m-1$ and $q= m-1 + \frac{\min(\theta,m)+\alpha-N}{N}p$, the argument above gives
\begin{equation}\label{ee0}
C_1 R^{2N-\min(\theta,m)-\alpha} \leq \int_{B_{2R} \backslash B_R} u^{q-m+1}
\leq C_2 R^{2N-\min(\theta,m)-\alpha} \quad\mbox{ for all }R> 2.
\end{equation}
Since $$\frac{p}{p+m-1-q} = \frac{N}{2N-\min(\theta,m)-\alpha},$$ estimate \eqref{pr1} yields
\begin{equation*}
CR^N \leq \left( \int_{B_{2R} \backslash B_R} u^p \right)^{\frac{m-1-q}{p+m-1-q}}
\cdot CR^N \quad\mbox{ for all }R> 2.
\end{equation*}
Hence
\begin{equation*}
\int_{B_{2R} \backslash B_R} u^p \geq C \quad\mbox{ for all }R> 2,
\end{equation*}
which shows that $\int_{\R^N\setminus B_1} u^p = \infty$ and then $\int_{B_{2R}\setminus B_1} u^p
\rightarrow \infty$ as $R\rightarrow \infty$. Further, from \eqref{pr3} we have
\begin{equation*}
\int_{B_{2R}\setminus B_R}u^{q-m+1}
\leq \frac{CR^{2N-\min(\theta,m)-\alpha}}{\int_{B_{2R}\setminus B_1}u^p}=
o(R^{2N-\min(\theta,m)-\alpha}) \quad\mbox{ as } R\rightarrow \infty,
\end{equation*}
which contradicts the first estimate in \eqref{ee0}.

(iii) Assume that $p+q= m-1$ and $\min(\theta,m)>-\alpha$. By Lemma \ref{l1}, we have
\begin{equation}\label{pq1}
\Big( \int_{B_{2R} \setminus B_1} u^p \Big) \Big( \int_{B_{2R}\setminus B_R} u^{-p} \Big)
\leq CR^{2N-\min(\theta,m)-\alpha} \quad\mbox{ for all }R> 2.
\end{equation}
On the other hand, H\"older's inequality gives
\begin{equation}\label{pq1.2}
CR^{2N}= \Big(\int_{B_{2R}\setminus B_R} 1 \Big)^2\leq \Big(\int_{B_{2R}\setminus B_R}u^p\Big)
\Big(\int_{B_{2R}\setminus B_R}u^{-p}\Big) \quad\mbox{ for all }R> 2.
\end{equation}
Since $\min(\theta,m)+\alpha>0$, the estimates \eqref{pq1} and \eqref{pq1.2} cannot hold for large
enough $R>2$. This shows that \eqref{gog} cannot have positive solutions.

(iv) Assume that $p+q< m-1$ and $\min(\theta,m)\geq -\alpha$. We apply H\"older's inequality to
derive \eqref{pr1} which we may rewrite as
\begin{equation*}
CR^N\leq \Big(\int_{B_{2R}\setminus B_R}u^p\Big)^{\frac{m-1-p-q}{p+m-1-q}}
\Big[\Big(\int_{B_{2R}\setminus B_R}u^{p}\Big)
\Big(\int_{B_{2R}\setminus B_R}u^{q-m+1}\Big)\Big]^{\frac{p}{p+m-1-q}}.
\end{equation*}
Using the estimate in Lemma \ref{l1}, we find
\begin{equation*}
R^N \leq C\Big(R^{2N-\min(\theta,m)-\alpha}\Big)^{\frac{p}{p+m-1-q}}
\Big(\int_{B_{2R}\setminus B_R}u^p\Big)^{\frac{m-1-p-q}{p+m-1-q}} \quad\mbox{ for all }R> 2,
\end{equation*}
which implies
\begin{equation}\label{pr5}
\int_{B_{2R}\setminus B_R}u^p \geq
CR^{N+\frac{p(\min(\theta,m)+\alpha)}{m-1-p-q}} \quad\mbox{ for all }R> 2.
\end{equation}
On the other hand, since $I_\alpha\ast u^p<\infty$ yields
\begin{equation*}
\int_{\R^N\setminus B_1}\frac{u(x)^p}{|x|^{N-\alpha}} \,dx< \infty,
\end{equation*}
which further implies
\begin{equation*}
R^{\alpha-N}\int_{B_{2R}\setminus B_R}u^p \leq C \quad\mbox{ for all }R> 2.
\end{equation*}
Combining this with \eqref{pr5}, we find
\begin{equation*}
C_1 R^{N-\alpha}\geq \int_{B_{2R}\setminus B_R}u^p
\geq C_2 R^{N+\frac{p(\min(\theta,m)+\alpha)}{m-1-p-q}} \quad\mbox{ for all }R> 2,
\end{equation*}
which gives a contradiction because $N-\alpha< N \leq N+\frac{p(\min(\theta,m)+\alpha)}{m-1-p-q}$
by assumption.
\end{proof}

\section{Proof of the main results}

We first provide two nonexistence results related to \eqref{go}.

\begin{proposition}\label{mL1}
Let $p>0$, $q,\mu\in \R$ and $\alpha\in (0, N)$.

If one of the following conditions hold:
\begin{enumerate}
\item[\rm (i)] $m-1< q\leq m-1-\frac{N-m-\alpha}{|\beta^{-}|}$ and $\, \alpha> N-m$;
\item[\rm (ii)] $m-1\leq p+q\leq m-1+\frac{m+\alpha}{|\beta^{-}|}$;
\end{enumerate}
then \eqref{go} has no positive solutions.

\end{proposition}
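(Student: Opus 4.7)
The plan is to combine the Hardy-type lower bound for $u$ furnished by Proposition~\ref{p1} with the pointwise estimates of Lemma~\ref{lbas} to reduce the nonlocal inequality \eqref{go} to the purely local inequality \eqref{eqA}, and then invoke the nonexistence results of Proposition~\ref{p2} (or Theorem~\ref{thm2}) already established. Suppose for contradiction that $u$ is a positive solution of \eqref{go}. Since $(I_\alpha\ast u^p)u^q\geq 0$, $u$ also solves \eqref{eqA0}, so Proposition~\ref{p1} forces $\mu\leq C_H$ and supplies the pointwise bound $u(x)\geq c|x|^{\beta^-}$ on $\R^N\setminus B_2$.

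For case (i), Lemma~\ref{lbas}(i) immediately yields $(I_\alpha\ast u^p)(x)\geq C|x|^{\alpha-N}$ for $|x|\geq 2$, so \eqref{go} becomes an instance of \eqref{eqA} with $\sigma=N-\alpha$. Since $q>m-1$, Proposition~\ref{p2}(i) excludes positive solutions whenever $\sigma\leq \beta^-(q-m+1)+m$; a direct rearrangement (using $\beta^-<0$) identifies this condition with the hypothesis $q\leq m-1-(N-m-\alpha)/|\beta^-|$, producing the desired contradiction.

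For case (ii), I split on $p+q$. If $p+q=m-1$, then Theorem~\ref{thm2}(iii) applies directly with $\theta=m$ (so that $\min(\theta,m)=m>-\alpha$) and excludes positive solutions. Otherwise $p+q>m-1$; applying Lemma~\ref{lbas}(ii) to the Hardy bound yields either $I_\alpha\ast u^p\equiv\infty$ (in violation of \eqref{c1}) when $\alpha+p\beta^-\geq 0$, or $(I_\alpha\ast u^p)(x)\geq C|x|^{\alpha+p\beta^-}$ otherwise, so \eqref{go} reduces to \eqref{eqA} with $\sigma=p|\beta^-|-\alpha>0$. When $q\geq m-1$, Proposition~\ref{p2}(i) applies and its condition $\sigma\leq \beta^-(q-m+1)+m$ rearranges precisely to the hypothesis $p+q\leq m-1+(m+\alpha)/|\beta^-|$. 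When $q<m-1$, Proposition~\ref{p2} is no longer directly useful (its $q<m-1$ alternatives involve $\beta^+$ rather than $\beta^-$), so I revisit Lemma~\ref{l1} with $\theta=m$ and combine it with the H\"older argument from the proof of Theorem~\ref{thm2}(i). Together they yield $\big(\int_{B_{2R}\setminus B_R}u^p\big)^{p+q-m+1}\leq CR^{N(p+q-m+1)-p(m+\alpha)}$, which when matched against the Hardy-based lower bound $\int_{B_{2R}\setminus B_R}u^p\geq CR^{N-p|\beta^-|}$ forces $p+q\geq m-1+(m+\alpha)/|\beta^-|$ as $R\to\infty$, contradicting the strict part of the hypothesis.

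The delicate point is the boundary equality $p+q=m-1+(m+\alpha)/|\beta^-|$ in the $q<m-1$ regime, where the polynomial matching yields equal exponents on both sides and no immediate contradiction; this marginal case calls for a logarithmic refinement of the Hardy lower bound, in the spirit of Remark~\ref{remk}(ii), to produce the needed strict improvement.
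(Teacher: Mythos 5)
Most of your proposal is sound and stays close to the paper's own route: part (i) is exactly the published argument; in part (ii) your treatment of $p+q=m-1$ via Theorem \ref{thm2}(iii) with $\theta=m$, of $q\geq m-1$ via Proposition \ref{p2}(i) with $\sigma=p|\beta^-|-\alpha$ (which indeed covers the whole range up to and including $p+q=m-1+\frac{m+\alpha}{|\beta^-|}$), and of the strict range $p+q<m-1+\frac{m+\alpha}{|\beta^-|}$ with $q<m-1$ via Lemma \ref{l1} plus H\"older all check out; the paper reaches the strict range by a single Cauchy--Schwarz estimate on the annulus, but your version gives the same conclusion. The genuine gap is the case you yourself flag: $q<m-1$ together with the boundary equality $p+q=m-1+\frac{m+\alpha}{|\beta^-|}$, which is part of hypothesis (ii) and must be handled. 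Saying that it ``calls for a logarithmic refinement of the Hardy lower bound, in the spirit of Remark \ref{remk}(ii)'' names the target but not the mechanism: Proposition \ref{p1} gives no logarithmic gain over $u\geq c|x|^{\beta^-}$, and Remark \ref{remk}(ii) is merely a computation for explicit radial functions, so no such refinement follows from the tools you have cited.

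What closes this case in the paper is a comparison argument that uses the strength of the right-hand side of \eqref{go}, not just its positivity. From $u\geq c|x|^{\beta^-}$ and Lemma \ref{lbas}(ii) one gets $p>\frac{\alpha}{|\beta^-|}$ and $I_\alpha\ast u^p\geq c|x|^{\alpha+p\beta^-}$, so $u$ satisfies $-\Delta_m u-\frac{\mu}{|x|^m}u^{m-1}\geq c|x|^{\alpha+p\beta^-}u^q$ in $\R^N\setminus B_2$. One then checks via \eqref{radial2}--\eqref{abc2} that $v(x)=\kappa|x|^{\beta^-}\log^\tau(s|x|)$ with $0<\tau<1/(m-1-q)$ is a subsolution of this same local inequality: the key point is $A=G(\beta^-)-\mu=0$, so the leading logarithmic exponent on the left is at most $\tau(m-1)-1<\tau q$, matching the right-hand side for $s$ large and $\kappa$ small. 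The comparison principle of \cite[Theorem 2.1]{LLM2007}, after shrinking $\kappa$ so that $u\geq v$ on $\partial B_2$, then yields $u\geq \kappa|x|^{\beta^-}\log^{\tau}(s|x|)$ in $\R^N\setminus B_2$, hence $\int_{B_{2R}\setminus B_R}u^p\geq cR^{N+p\beta^-}\log^{p\tau}(sR)$, which contradicts for large $R$ the bound $\int_{B_{2R}\setminus B_R}u^p\leq CR^{N+p\beta^-}$ that your own H\"older/Lemma \ref{l1} estimate produces at the boundary equality. Without the subsolution construction and the quasilinear comparison principle, neither of which appears in your sketch, the boundary case with $q<m-1$ remains unproved, so the proposal as it stands is incomplete.
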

\begin{proof}
(i) Using Lemma \ref{lbas}(i), we have
$(I_\alpha*u^p)(x)\geq c|x|^{\alpha-N}$ in $\R^N\setminus B_2$.
Therefore, inequality \eqref{go} implies
\begin{equation}\label{go1}
-\Delta_m u - \frac{\mu}{|x|^m}u^{m-1} \geq c|x|^{\alpha-N}u^q \quad\mbox{ in }\R^N\setminus B_2.
\end{equation}
Taking $\sigma= N-\alpha$ in Proposition \ref{p2}(i), we find that if
$$
N-\alpha\leq \beta^{-}(q-m+1)+m,
$$
which further yields
$$
q\leq m-1-\frac{N-m-\alpha}{|\beta^{-}|},
$$
then, inequality \eqref{go} has no positive solution.

(ii)  We note first that $\beta^-<0$ otherwise, from \eqref{betam} and Lemma \ref{lbas}(ii) we deduce $I_\alpha\ast u^p=\infty$, contradiction.  Hence, $\beta^-<0$ wich corresponds to $-\infty <\mu\leq C_H$ if $N>m$, and $-\infty<\mu<0$ if $N\leq m$.

We divide our argument into two cases.

\noindent{ \bf Case 1:}  $p+q< m-1+\frac{m+\alpha}{|\beta^{-}|}$. By H\"older inequality we estimate
\begin{equation*}
\Big(\int_{B_{2R}\setminus B_R}u^p\Big)\Big(\int_{B_{2R}\setminus B_R}u^{q-m+1}\Big)\geq
\Big(\int_{B_{2R}\setminus B_R}u^{\frac{p+q-m+1}{2}}\Big)^2 \quad\mbox{ for all } R> 2.
\end{equation*}
Using Lemma \ref{l1} (taking $\theta= m$) together with the estimate $u\geq C|x|^{\beta^{-}}$ in $\R^N\setminus
B_2$ we find
\begin{equation*}
C_1 R^{2N-m-\alpha}\geq \Big(\int_{B_{2R}\setminus B_R}u^{\frac{p+q-m+1}{2}}\Big)^2
\geq C_2 R^{2N+(p+q-m+1)\beta^{-}} \quad\mbox{ for all }R> 2.
\end{equation*}
However, this is a contradiction as $2N-m-\alpha< 2N+(p+q-m+1)\beta^{-} $.

\noindent{ \bf Case 2:}  $p+q= m-1+\frac{m+\alpha}{|\beta^{-}|}$.

In view of \eqref{betam} and Lemma \ref{lbas}(ii) we must have
$$
p> \frac{\alpha}{|\beta^{-}|}\quad\mbox{ and }\quad I_\alpha\ast u^p\geq c|x|^{\alpha+p\beta^-}\; \mbox{ in }\; \R^N\setminus B_2.
$$
Thus, $u$ satisfies
\begin{equation}\label{eqA1}
-\Delta_m u - \frac{\mu}{|x|^m}u^{m-1} \geq c|x|^{\alpha+p\beta^-} u^{q} \quad\mbox{ in } \R^N\setminus B_2.
\end{equation}

Assume first $q\geq m-1$. Then, by Proposition \ref{p2}(i)  with $\sigma=-\alpha+p|\beta^-|$ we deduce that if $p+q= m-1+\frac{m+\alpha}{|\beta^{-}|}$ then, inequality \eqref{go} has no positive solutions.

We discuss next the case  $q<m-1$. By estimate \eqref{pr1} (which follows from H\"older's inequality) and Lemma \ref{l1} we find
$$
\begin{aligned}
CR^N&=\int_{B_{2R}\setminus B_R} 1
\leq \Big(\int_{B_{2R}\setminus B_R}u^p\Big)^{\frac{m-1-q}{p+m-1-q}} \Big(\int_{B_{2R}\setminus B_R}u^{q-m+1}\Big)^{\frac{p}{p+m-1-q}}\\
&=\Big(\int_{B_{2R}\setminus B_R}u^p\Big)^{\frac{m-1-q-p}{p+m-1-q}}
\Big[\Big(\int_{B_{2R}\setminus B_R}u^p\Big)\Big(\int_{B_{2R}\setminus B_R}u^{q-m+1}\Big)
\Big]^{\frac{p}{p+m-1-q}}\\
&\leq c \big(R^{2N-m-\alpha}\big)^{\frac{p}{p+m-1-q}} \Big(\int_{B_{2R}\setminus B_R}u^p\Big)^{\frac{m-1-q-p}{p+m-1-q}} .
\end{aligned}
$$
Since $p+q>m-1$ the above inequality yields
\begin{equation}\label{etm1}
\int_{B_{2R}\setminus B_R}u^p\leq C R^{N+p\beta^-} \quad\mbox{ for all }R>2.
\end{equation}
Let us observe now that for $\kappa>0$ sufficiently small and $0<\tau<1/(m-1-q)$, the function
$$
v(x)=\kappa |x|^{\beta^-}\log^\tau(s|x|)
$$
satisfies
\begin{equation}\label{eqAA7}
-\Delta_m v - \frac{\mu}{|x|^m}v^{m-1} \leq c|x|^{\alpha+p\beta^-} v^{q} \quad\mbox{ in } \R^N\setminus B_2,
\end{equation}
where $c>0$ is the constant from \eqref{eqA1}.
Indeed, the left-hand side in \eqref{eqAA7} can be computed using \eqref{radial2} in which  we have $A=G(\beta^-)-\mu=0$ in \eqref{abc2}. Thus, the power of the logarithmic term in \eqref{radial2} is at most $\tau(m-1)-1<\tau q$ which shows that $v$ fulfills \eqref{eqAA7}. Taking now $\kappa>0$ small enough such that $u\geq v$ on $\partial B_2$, by the comparison principle in \cite[Theorem 2.1]{LLM2007} it follows that $u\geq v$ in $\R^N\setminus B_2$. Thus,
$$
u(x)\geq  \kappa |x|^{\beta^-}\log^\tau(s|x|)\quad\mbox{ in }\R^N\setminus B_2.
$$
Now, from the above estimate and \eqref{etm1} we find
$$
cR^{N+p\beta^-}\log^{p\tau} (sR)\leq   \int_{B_{2R}\setminus B_R}u^p\leq C R^{N+p\beta^-} \quad\mbox{ for all }R>2,
$$
which is a contradiction for large $R>2$, since $\tau>0$.
\end{proof}

In the next result we provide conditions for nonexistence of a positive solution to \eqref{go} separately for $m\geq N$ and $N>m$.

\begin{proposition}\label{mL2}
Let $p>0$, $q,\mu\in \R$.
\begin{enumerate}
\item[\rm (i)] If $m\geq N$ and one of the following conditions hold:
\begin{enumerate}
\item[\rm (i1)] $0\leq \mu \leq C_H$;
\item[\rm (i2)] $\mu< 0$ and $p \leq \frac{\alpha}{|\beta^{-}|}$;
\end{enumerate}
then \eqref{go} has no positive solutions.
\item[\rm (ii)] If $N>m> 1$ and one of the following conditions hold:
\begin{enumerate}
\item[\rm (ii1)] $p\leq \frac{\alpha}{|\beta^{-}|} $;
\item[{\rm (ii2)}] $q\leq m-1-\frac{N-m-\alpha}{|\beta^+|}$, $\mu>0$ and $\alpha<N-m$;
\end{enumerate}
then \eqref{go} has no positive solutions.
\end{enumerate}
\end{proposition}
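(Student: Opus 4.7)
The proof splits into two natural streams based on the hypotheses. In (i1), (i2) and (ii1), I will combine Proposition \ref{p1}, which forces $\mu \leq C_H$ and provides the pointwise lower bound $u(x) \geq c|x|^{\beta^-}$ in $\R^N \setminus B_2$, with Lemma \ref{lbas}(ii) to conclude that $I_\alpha \ast u^p \equiv \infty$; this violates the local integrability built into the definition of a positive solution via \eqref{c1}. In (ii2) I will instead feed the unconditional lower bound $I_\alpha \ast u^p \geq C|x|^{\alpha - N}$ from Lemma \ref{lbas}(i) into \eqref{go} and invoke the nonexistence Proposition \ref{p2}.

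For the three Proposition \ref{p1}-driven cases, the only thing to check is that $\alpha + p\beta^- \geq 0$, since then Lemma \ref{lbas}(ii) immediately produces the contradiction. In (i1), with $m \geq N$ and $0 \leq \mu \leq C_H$, Lemma \ref{mu}(iii) forces $\beta^- \geq 0$, hence $\alpha + p\beta^- \geq \alpha > 0$. In (i2), with $m \geq N$ and $\mu < 0$, Lemma \ref{mu}(iii) gives $\beta^- < 0$, and the hypothesis $p \leq \alpha/|\beta^-|$ rearranges to $\alpha + p\beta^- = \alpha - p|\beta^-| \geq 0$. In (ii1), with $N > m$ and $\mu \leq C_H$ (the latter forced by Proposition \ref{p1}), Lemma \ref{mu}(iii) again delivers $\beta^- < 0$ and the same computation applies.

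For case (ii2), Lemma \ref{lbas}(i) reduces the problem to
\[
-\Delta_m u - \frac{\mu}{|x|^m} u^{m-1} \geq C|x|^{\alpha - N} u^q \quad \text{in } \R^N \setminus B_2,
\]
which is of the form \eqref{eqA} with $\sigma = N - \alpha$. Under the standing assumption $\mu \leq C_H$ (otherwise there is nothing to prove), the facts $\mu > 0$, $G(0) = 0$ and $\beta^- < 0$ together force $\beta^+ < 0$ as well. Writing $\beta^+ = -|\beta^+|$, the hypothesis $q \leq m - 1 - (N - m - \alpha)/|\beta^+|$ rearranges exactly to $\sigma \leq \beta^+(q - m + 1) + m$, and the positivity of $(N - m - \alpha)/|\beta^+|$ forces $q < m - 1$. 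When $\mu < C_H$, Proposition \ref{p2}(ii) excludes positive solutions. When $\mu = C_H$, one has $\beta^+ = \beta_* = (m - N)/m$, and a further split is needed: in the strict case Proposition \ref{p2}(iii) applies, while in the equality case the explicit value $q = -1 + m\alpha/(N - m) > -1$ allows Proposition \ref{p2}(iv) to apply.

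The main obstacle I anticipate is the sub-case $\mu = C_H$ inside (ii2): Proposition \ref{p2}(ii) is not available there and must be replaced by (iii) or (iv), the latter requiring the side condition $q \geq -1$ that is fortunately automatic from the explicit form of $q$ on the boundary curve. Everything else is the careful but routine algebra of translating each stated hypothesis into either the Hardy-scaling inequality $\alpha + p\beta^- \geq 0$ or the polynomial condition $\sigma \leq \beta^+(q - m + 1) + m$.
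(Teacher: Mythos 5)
Your proposal is correct and follows essentially the same route as the paper: cases (i1), (i2) and (ii1) via the lower bound of Proposition \ref{p1} combined with Lemma \ref{lbas}(ii) to force $I_\alpha*u^p\equiv\infty$, and case (ii2) via Lemma \ref{lbas}(i) and Proposition \ref{p2} with $\sigma=N-\alpha$. Your explicit verification that $q=-1+\frac{m\alpha}{N-m}>-1$ in the boundary sub-case $\mu=C_H$ simply spells out the detail behind the paper's appeal to Proposition \ref{p2}(iii)--(iv).
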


\begin{proof} We proceed by contradiction and suppose that \eqref{go} has a positive solution $u$. By Proposition \ref{p1} we have $u\geq c|x|^{\beta^-}$ in $\R^N\setminus \overline B_2$.

(i1) Assume $m\geq N$ and $0\leq \mu \leq C_H$. Then $\beta^{-}\geq 0$ by Lemma \ref{mu}(iii) and
so $\alpha+p\beta^{-}>0$. Therefore, by Lemma \ref{lbas}(ii), $(I_\alpha*u^p)(x)=\infty$ for all
$x\in \R^N\setminus B_2$, which contradicts the fact that $(I_\alpha*u^p)u^q \in
L^{1}_{loc}(\R^N\setminus \overline B_1)$. Hence, \eqref{go} has no positive solution.

(i2) Assume $m\geq N$ and $\mu< 0$. Then $\beta^- < 0$ by Lemma \ref{mu}(iii) and so
\begin{equation*}
\alpha + p\beta^- = \alpha - p|\beta^-| \geq 0
\end{equation*}
by condition (i2).  Hence, by Lemma \ref{lbas}(ii), $(I_\alpha*u^p)(x)=\infty$ for all $x\in
\R^N\setminus B_2$, which contradicts the fact that $(I_\alpha*u^p)u^q \in L^{1}_{loc}( \R^N\setminus \overline B_1)$ and
\eqref{go} has no positive solution.

(ii1) When $N>m$, we have $\beta^- < 0$ by Lemma \ref{mu}(iii), so one may proceed as in (i2) to
conclude that \eqref{go} has no positive solution.

(ii2) By Lemma \ref{lbas}(i) we have that $u$ is a solution of \eqref{go1} for some constant $c>0$.
Since $N>m$ and $\mu>0$ we have $\beta^+<0$.

If $\mu<C_H$ we apply Proposition \ref{p2}(ii) with $\sigma=N-\alpha$ to deduce that \eqref{go} has no solutions. If $\mu=C_H$ then Proposition \ref{p2}(iii)-(iv) with $\sigma=N-\alpha$ yields the same conclusion.

\end{proof}

\subsection{Proof of Theorem \ref{thmain1}}

Conditions (i)-(iii) follow from Proposition \ref{p1}, Proposition \ref{mL2}(ii1), Proposition
\ref{mL1}(ii) and Theorem \ref{thm2}(iii)-(iv). Also, condition (iv) in Theorem \ref{thmain1}
follows from Proposition \ref{mL1}(i), Proposition \ref{mL2}(ii2) and Theorem \ref{thm2}(ii)-(iv).

Suppose now that conditions (i)-(iv) in Theorem \ref{thmain1} hold.  To construct a positive
solution of \eqref{go}, we divide our argument into two cases.

\noindent{\bf Case 1:} $\mu<C_H$.  In this case, we construct a solution of the form
\begin{equation}\label{u1}
u(x)= \kappa |x|^\gamma,
\end{equation}
where $\beta^- < \gamma < \min\{ \beta^+ , 0\}$ and $\kappa>0$ is sufficiently small.  According to
\eqref{radial1}, one has
\begin{equation*}
-\Delta_m u -\frac{\mu}{|x|^m}u^{m-1}= \kappa^{m-1} (G(\gamma)-\mu)|x|^{\gamma(m-1)-m} \quad\mbox{ in }\R^N\setminus B_1.
\end{equation*}
Since $\beta^- < \gamma < \beta^+$ by assumption, the constant $G(\gamma)-\mu$ is positive and we
get
\begin{equation}\label{exs1}
-\Delta_m u -\frac{\mu}{|x|^m}u^{m-1}= C_1\kappa^{m-1} |x|^{\gamma(m-1)-m} \quad\mbox{ in }\R^N\setminus B_1.
\end{equation}

\noindent{\bf Subcase 1a:} If $-\frac Np \leq \beta^-$, then we may choose the exponent $\gamma$ so
that
\begin{equation} \label{b1}
-\frac Np \leq \beta^- < \gamma < \min \left\{ \beta^+ , -\frac{\alpha}{p}, -\frac{m+\alpha}{p+q-m+1} \right\}.
\end{equation}
This is possible because of conditions (ii) and (iii) in Theorem \ref{thmain1}.  Since
$p|\gamma|<N$, it then follows by Lemma \ref{lbas}(iii) with $\tau=0$ that
\begin{equation*}
(I_\alpha*u^p) \cdot u^q \leq C\kappa^p |x|^{\alpha+p\gamma} \cdot u^q
= C\kappa^{p+q} |x|^{\alpha+(p+q)\gamma} \quad\mbox{ in }\R^N\setminus B_1.
\end{equation*}
Using the upper bound in \eqref{b1} along with condition (iii), we conclude that
\begin{align*}
(I_\alpha*u^p) \cdot u^q &\leq C\kappa^{p+q} |x|^{\gamma(m-1)-m}
\leq C_1\kappa^{m-1} |x|^{\gamma(m-1)-m} \quad\mbox{ in }\R^N\setminus B_1
\end{align*}
for all small enough $\kappa>0$.  In view of \eqref{exs1}, this implies that $u$ is a solution of
\eqref{go}.

\noindent{\bf Subcase 1b:} If $\beta^- < -\frac Np$ and $\alpha>N-m$, then $q>m-1$ by condition
(iv)$_1$ and one may use this condition to find some
\begin{equation} \label{b2}
\beta^- < \gamma < \min \left\{ \beta^+ , -\frac{N}{p}, \frac{N-m-\alpha}{q-m+1} \right\}.
\end{equation}
Since $p|\gamma|>N$, it then follows by Lemma \ref{lbas}(iii) with $\tau=0$ that
\begin{equation} \label{exs2}
(I_\alpha*u^p) \cdot u^q \leq C\kappa^p |x|^{\alpha-N} \cdot u^q
= C\kappa^{p+q} |x|^{\alpha-N+q\gamma} \quad\mbox{ in }\R^N\setminus B_1.
\end{equation}
Using the upper bound in \eqref{b2} along with condition (iii), we conclude that
\begin{align*}
(I_\alpha*u^p) \cdot u^q &\leq C\kappa^{p+q} |x|^{\gamma(m-1)-m}
\leq C_1\kappa^{m-1} |x|^{\gamma(m-1)-m} \quad\mbox{ in }\R^N\setminus B_1
\end{align*}
for all small enough $\kappa>0$.  Once again, this implies that $u$ is a solution of \eqref{go}.

\noindent{\bf Subcase 1c:} If $\beta^- < -\frac Np$ and $\alpha\leq N-m$ and $q\geq m-1$, we choose
$\gamma$ so that
\begin{equation} \label{b3}
\beta^- < \gamma < \min \left\{ \beta^+ , -\frac{N}{p} \right\}.
\end{equation}
Since $p|\gamma|>N$, our estimate \eqref{exs2} remains valid and we still have
\begin{align*}
(I_\alpha*u^p) \cdot u^q \leq C\kappa^{p+q} |x|^{\alpha-N+q\gamma}
\leq C_1\kappa^{m-1} |x|^{-m + \gamma(m-1)} \quad\mbox{ in }\R^N\setminus B_1
\end{align*}
for all small enough $\kappa>0$ because $\alpha-N\leq -m$ and $q\gamma \leq (m-1)\gamma$.

\noindent{\bf Subcase 1d:} If $\beta^- < -\frac Np$ and $\alpha< N-m$ and $q<m-1$, we choose
$\gamma$ so that
\begin{equation} \label{b4}
\max \left\{ \beta^- , -\frac{N-m-\alpha}{m-1-q} \right\} <
\gamma < \min \left\{ \beta^+ , -\frac{N}{p} \right\}.
\end{equation}
This is possible because of conditions (iv)$_3$ and (iv)$_4$ in Theorem \ref{thmain1}.  Condition
(iv)$_4$ is only needed when $\mu>0$, because $\beta^+ \geq 0$ when $\mu\leq 0$. Since
$p|\gamma|>N$, we get
\begin{align*}
(I_\alpha*u^p) \cdot u^q \leq C\kappa^{p+q} |x|^{\alpha-N+q\gamma}
\leq C\kappa^{p+q} |x|^{\gamma(m-1)-m} \quad\mbox{ in }\R^N\setminus B_1
\end{align*}
because of the lower bound in \eqref{b4}.  Thus, the result follows exactly as before.

\noindent{\bf Case 2:} $\mu=C_H$. In this case, we have
\begin{equation*}
\beta^- = \beta^+ = \beta_* = \frac{m-N}{m}
\end{equation*}
and we look for a solution of the form
\begin{equation}\label{u2}
u(x)= \kappa |x|^{\beta_*} \log^\tau \big(s|x|\big),
\end{equation}
where $\tau\in (0, 2/m)$ and $s>1$ are chosen so that $|\beta_*|\log s>\tau$.
We have that $u$ satisfies \eqref{radial2} in which $a,b,c$ defined in \eqref{abc1} are given by
\begin{equation*}
a=\beta_*^2\,, \quad b=-\tau \beta_*(m-2)\,,\quad c=-\tau(\tau-1)(m-1).
\end{equation*}
Thus, the coefficients $A$, $B$ and $C$ defined in \eqref{abc2} are now given by
\begin{equation*}
A=B=0\,,\quad C=\frac{1}{2}\tau |\beta_*|^{m-2}(m-1)(2-m\tau)>0.
\end{equation*}
Hence, by taking $s>1$ large enough we may ensure from \eqref{radial2} that
\begin{equation}\label{exs3}
-\Delta_m u -\frac{\mu}{|x|^m}u^{m-1}\geq c \kappa^{m-1} |x|^{(m-1)\beta_*-m}
\log^{\tau(m-1)-2}\big(s |x|\big) \quad\mbox{ in }\R^N\setminus B_1.
\end{equation}
With $u$ given by \eqref{u2}, from Lemma \ref{lbas}(iii) we have
\begin{equation*}
(I_\alpha*u^p)u^q(x)\leq
C\kappa^{p+q}\left\{
\begin{aligned}
&|x|^{\alpha+(p+q)\beta_*}\log^{(p+q)\tau}(s|x|)&&\quad\mbox{ if }p|\beta_*|<N\\
&|x|^{\alpha-N+q\beta_*}\log^{1+(p+q)\tau}(s|x|)&&\quad\mbox{ if }p|\beta_*|=N\\
&|x|^{\alpha-N+q\beta_*}\log^{q\tau}(s|x|)&&\quad\mbox{ if }p|\beta_*|>N
\end{aligned}
\right. \quad\mbox{ in }\; \R^N\setminus B_1.
\end{equation*}
In order to conclude the proof, we only need to check that
\begin{equation}\label{eqqab}
\left\{
\begin{aligned}
&(m-1)\beta_*-m>\alpha+(p+q)\beta_*&&\quad\mbox{ if } p|\beta_*|<N,\\
&(m-1)\beta_*-m>\alpha-N+q\beta_*&&\quad\mbox{ if } p|\beta_*|\geq N.
\end{aligned}
\right.
\end{equation}
The first condition in \eqref{eqqab} follows directly from condition (iii) in Theorem
\ref{thmain1}. The second condition in \eqref{eqqab} follows from conditions (iv)$_1$, (iv)$_2$ and
(iv)$_4$ in Theorem \ref{thmain1}; note that in this setting, the case $q=m-1$ and $p|\beta_*|=N$
cannot occur. \qed

\subsection{Proof of Theorem \ref{thmain2}}

Conditions (i)-(iv) follow from Proposition \ref{p1}, Proposition \ref{mL2}(i1)-(i2), Proposition \ref{mL1}(i)-(ii) and Theorem \ref{thm2}(iii)-(iv).

Conversely, if conditions (i)-(iv) in Theorem \ref{thmain2} hold, then one may construct a positive
solution of \eqref{go} by using the same method as in Case 1 of the proof of Theorem \ref{thmain1}.
More precisely, one can show that \eqref{go} has a solution of the form \eqref{u1}.  Since $m\geq
N$ and $\mu<0$, one has $\beta^- < 0 < \beta^+$ by Lemma \ref{mu}(iii).  In particular, our
previous approach applies almost verbatim and one may choose the exponent $\gamma$ exactly as
before for each of the four subcases.  Since the proof is almost identical, we shall omit the
details.

\qed

\noindent{\bf Acknowledgments.}  The authors would like to thank the anonymous referee for the careful reading of our manuscript and for suggestions which lead to the current work.
The third named author, Gurpreet Singh, acknowledges the financial support of The Irish Research
Council Postdoctoral Scholarship, number R13165.

\end{document}